%%%%%%%%%%%%%%%%%%%%%%%%%%%%%%%%%%%%%%%%%%%%%%%%%
%%%%%%%%%%%%%%%%%%%%%%%%%%%%%%%%%%%%%%%%%%%%%%%%%
%%%%%%%%%%%%%%%%%%%%%%%%%%%%%%%%%%%%%%%%%%%%%%%%%
\documentclass{amsart}
\usepackage{amssymb,amsthm,amstext,amsmath,amscd,latexsym,amsfonts}
\usepackage[all]{xy}

%%%%%%%%%%%%%%%%%%%%%%%%%%%%%%%%%%%%%%%%%%%%%%%%%
%%%%%%%%%%%%%%%%%%%%%%%%%%%%%%%%%%%%%%%%%%%%%%%%%
%%%%% Page size %%%%%%%%%%%%%%%%%%%%%%%%%%%%%%%%%
%%%%%%%%%%%%%%%%%%%%%%%%%%%%%%%%%%%%%%%%%%%%%%%%%
%%%%%%%%%%%%%%%%%%%%%%%%%%%%%%%%%%%%%%%%%%%%%%%%%
%\textwidth=145mm
%\textheight=210mm
%\oddsidemargin=10mm
%\evensidemargin=10mm
%\topmargin=0mm
%%%%%%%%%%%%%%%%%%%%%%%%%%%%%%%%%%%%%%%%%%%%%%%%%%%%%%%%
%%%%%%%%%%%%%%%%%%%%%%%%%%%%%%%%%%%%%%%%%%%%%%%%%%%%%%%%
%%%%%%%%%%%%%%%%%%%%% Special notation %%%%%%%%%%%%%%%%%
%%%%%%%%%%%%%%%%%%%%%%%%%%%%%%%%%%%%%%%%%%%%%%%%%%%%%%%%
%%%%%%%%%%%%%%%%%%%%%%%%%%%%%%%%%%%%%%%%%%%%%%%%%%%%%%%%
\def\CM{{\mathrm{CM}}}

\def\modR{\mathrm{mod}(R)}
\def\CMA{\mathrm{CM}(A)}
\def\CMR{\mathrm{CM}(R)}

\def\ModC{\mathrm{Mod}(\CMR)}
\def\modC{\mathrm{mod}(\CMR)}
\def\GR{\mathrm{G}(\CMR )}

%%%%%%%%%%%%%%%%%%%%%%%%%%%%%%%%%%%%%%%%%%%%%%%%%%%%%%%%
%%%%%%%%%%%%%%%%%%%%%%%%%%%%%%%%%%%%%%%%%%%%%%%%%%%%%%%%
%%%%%%%%%%%%%%%%%% stable notation %%%%%%%%%%%%%%%%%%%%%
%%%%%%%%%%%%%%%%%%%%%%%%%%%%%%%%%%%%%%%%%%%%%%%%%%%%%%%%
%%%%%%%%%%%%%%%%%%%%%%%%%%%%%%%%%%%%%%%%%%%%%%%%%%%%%%%%
\def\ulL{{\underline{L}}}
\def\ulM{{\underline{M}}}
\def\ulN{{\underline{N}}}
\def\ulZ{{\underline{Z}}}

\def\SCM{{\underline{\mathrm{CM}}}}
\def\SCMA{{\underline{\mathrm{CM}}}(A)}
\def\SCMR{{\underline{\mathrm{CM}}}(R)}

\def\SHom{{\underline{\mathrm{Hom}}}}
\def\SEnd{{\underline{\mathrm{End}}}}
%%%%%%%%%%%%%%%%%%%%%%%%%%%%%%%%%%%%%%%%%%%%%%%%%%%%%%%%
%%%%%%%%%%%%%%%%%%%%%%%%%%%%%%%%%%%%%%%%%%%%%%%%%%%%%%%%
%%%%%%%%%%%%%%%%%%% Order notation %%%%%%%%%%%%%%%%%%%%%
%%%%%%%%%%%%%%%%%%%%%%%%%%%%%%%%%%%%%%%%%%%%%%%%%%%%%%%%
%%%%%%%%%%%%%%%%%%%%%%%%%%%%%%%%%%%%%%%%%%%%%%%%%%%%%%%%
\def\homo{\leq _{hom}}
\def\dego{\leq _{deg}}
\def\Dego{\leq _{DEG}}

\def\exto{\leq _{ext}}
\def\Exto{\leq _{EXT}}

\def\ARo{\leq _{AR}}

\def\trio{\leq _{tri}}
\def\sto{\leq _{st}}
%%%%%%%%%%%%%%%%%%%%%%%%%%%%%%%%%%%%%%%%%%
%%%%%%%%%%%%%%%%%%%%%%%%%%%%%%%%%%%%%%%%%%
%%%%%%%%%%%%%%%% General notation %%%%%%%%
%%%%%%%%%%%%%%%%%%%%%%%%%%%%%%%%%%%%%%%%%%
%%%%%%%%%%%%%%%%%%%%%%%%%%%%%%%%%%%%%%%%%%
\def\Hom{\mathrm{Hom}}

%%%%%%%%%%%%%%%%%%%%%%%%%%%%%%%%%%%%%%%%%%%%%%%%%%%%%%%%
%%%%%%%%%%%%%%%%%%%%%%%%%%%%%%%%%%%%%%%%%%%%%%%%%%%%%%%%
%%%%%%%%%%%%%%%%%%%% Invariant of rings %%%%%%%%%%%%%%%%
%%%%%%%%%%%%%%%%%%%%%%%%%%%%%%%%%%%%%%%%%%%%%%%%%%%%%%%%
%%%%%%%%%%%%%%%%%%%%%%%%%%%%%%%%%%%%%%%%%%%%%%%%%%%%%%%%

%%%%%%%%%%%%%%%%%%%%%%%%%%%%%%%%%%%%%%%%%%%%%%%%%%%%%%%%%%
%%%%%%%%%%%%%%%%%%%%%%%%%%%%%%%%%%%%%%%%%%%%%%%%%%%%%%%%%%
%%%%%%%%%%%% frak and bb %%%%%%%%%%%%%%%%%%%%%%%%%%%%%%%%%
%%%%%%%%%%%%%%%%%%%%%%%%%%%%%%%%%%%%%%%%%%%%%%%%%%%%%%%%%%
%%%%%%%%%%%%%%%%%%%%%%%%%%%%%%%%%%%%%%%%%%%%%%%%%%%%%%%%%%
\def\N{\mathbb N}
\def\Z{\mathbb Z}
\def\m{\mathfrak m}
\def\p{\mathfrak p}
\def\q{\mathfrak q}
\def\L{\mathcal L} 
\def\R{\mathcal R} 
\def\Fitt{\mathcal F} 
%%%%%%%%%%%%%%%%%%%%%%%%%%%%%%%%%%%%%%%%%%%%%%%%%%%%%%%
%%%%%%%%%%%%%%%%%%%%%%%%%%%%%%%%%%%%%%%%%%%%%%%%%%%%%%%
%%%%%%%%%%%%%%%%% theoremstyle %%%%%%%%%%%%%%%%%%%%%%%%
%%%%%%%%%%%%%%%%%%%%%%%%%%%%%%%%%%%%%%%%%%%%%%%%%%%%%%%
%%%%%%%%%%%%%%%%%%%%%%%%%%%%%%%%%%%%%%%%%%%%%%%%%%%%%%%
\newtheorem{theorem}{Theorem}[section]
\newtheorem{lemma}[theorem]{Lemma}
\newtheorem{corollary}[theorem]{Corollary}
\newtheorem{proposition}[theorem]{Proposition}
\theoremstyle{definition}
\newtheorem{definition}[theorem]{Definition}
\newtheorem{example}[theorem]{Example}
\theoremstyle{remark}
\newtheorem{remark}[theorem]{Remark}
\numberwithin{equation}{section}
%    Absolute value notation

%    Blank box placeholder for figures (to avoid requiring any
%    particular graphics capabilities for printing this document).

\begin{document}
%%%%%%%%%%%%%%%%%%%%%%%%%%%%%%%%%%%%%%%%%%%%%%%%%%%%%%%%
%%%%%%%%%%%%%%%%%%%%%%%%%%%%%%%%%%%%%%%%%%%%%%%%%%%%%%%%
%%%%%%%%%%%%%%%%%%%%%%%%%%%%%%%%%%%%%%%%%%%%%%%%%%%%%%%%
%%%%%%%%%%%%%%%%%%%%%%%%%%%%%%%%%%%%%%%%%%%%%%%%%%%%%%%%
%%%%%%%%%%%%% Title %%%%%%%%%%%%%%%%%%%%%%%%%%%%%%%%%%%%
%%%%%%%%%%%%%%%%%%%%%%%%%%%%%%%%%%%%%%%%%%%%%%%%%%%%%%%%
%%%%%%%%%%%%%%%%%%%%%%%%%%%%%%%%%%%%%%%%%%%%%%%%%%%%%%%%
%%%%%%%%%%%%%%%%%%%%%%%%%%%%%%%%%%%%%%%%%%%%%%%%%%%%%%%%
%%%%%%%%%%%%%%%%%%%%%%%%%%%%%%%%%%%%%%%%%%%%%%%%%%%%%%%%
\title[Examples of degenerations]{\textbf{Examples of degenerations of Cohen-Macaulay modules}}
%%%%%%%%%%%%%%%%%%%%%%%%%%%%%%%%%%%%%%%%%
%%%%%%%%%%%%%%%%%%%%%%%%%%%%%%%%%%%%%%%%%
%%%%%%%% Informations of Authors %%%%%%%%
%%%%%%%%%%%%%%%%%%%%%%%%%%%%%%%%%%%%%%%%%
%%%%%%%%%%%%%%%%%%%%%%%%%%%%%%%%%%%%%%%%%
\author{Naoya Hiramatsu} 
\address{Department of Mathematics, Graduate School of Natural Science and Technology, Okayama University, Okayama 700-8530, Japan}
\email{naoya$\_$h@math.okayama-u.ac.jp, yoshino@math.okayama-u.ac.jp}

\author{Yuji Yoshino}
%\address{Department of Mathematics, Okayama University, 700-8530, Okayama, Japan}
%%%%%%%%%%%%%%%%%%%%%%%%%%%%%%%%%%%%%%%%%
%%%%%%%%%%%%%%%%%%%%%%%%%%%%%%%%%%%%%%%%%
%%%%%%%%%%%% General info %%%%%%%%%%%%%%%
%%%%%%%%%%%%%%%%%%%%%%%%%%%%%%%%%%%%%%%%%
%%%%%%%%%%%%%%%%%%%%%%%%%%%%%%%%%%%%%%%%%
\subjclass[2000]{Primary 13C14 ; Secondary  13D10, 16G50, 16G60, 16G70}
\date{\today}
\keywords{degeneration, Cohen-Macaulay module, finite representation type,
 Auslander-Reiten sequence}
%%%%%%%%%%%%%%%%%%%%%%%%%%%%%%%%%%%%%%%%%
%%%%%%%%%%%%%%%%%%%%%%%%%%%%%%%%%%%%%%%%%
%%%%%%%%%%%%%%%%%%%%%%%%%%%%%%%%%%%%%%%%%
%%%%%%%%%%%%% Abstract %%%%%%%%%%%%%%%%%%
%%%%%%%%%%%%%%%%%%%%%%%%%%%%%%%%%%%%%%%%%
%%%%%%%%%%%%%%%%%%%%%%%%%%%%%%%%%%%%%%%%%
%%%%%%%%%%%%%%%%%%%%%%%%%%%%%%%%%%%%%%%%%
\begin{abstract}
We study the degeneration problem for maximal Cohen-Macaulay modules and give several examples of such degenerations. 
It is proved that such degenerations over an even-dimensional simple hypersurface singularity of type $(A_n)$ are given by extensions. 
We also prove that all extended degenerations of maximal Cohen-Macaulay modules over a Cohen-Macaulay complete local algebra of finite representation type are obtained by iteration of extended degenerations of Auslander-Reiten sequences. 
\end{abstract}
\maketitle
%%%%%%%%%%%%%%%%%%%%%%%%%%%%%%%%%%%%%%%%%%
%%%%%%%%%%%%%%%%%%%%%%%%%%%%%%%%%%%%%%%%%%
%%%%%%%%%%%%%%%%%%%%%%%%%%%%%%%%%%%%%%%%%%
%%%%%%% Introduction %%%%%%%%%%%%%%%%%%%%%
%%%%%%%%%%%%%%%%%%%%%%%%%%%%%%%%%%%%%%%%%%
%%%%%%%%%%%%%%%%%%%%%%%%%%%%%%%%%%%%%%%%%%
%%%%%%%%%%%%%%%%%%%%%%%%%%%%%%%%%%%%%%%%%%
%%%%%%%%%%%%%%%%%%%%%%%%%%%%%%%%%%%%%%%%%%
\section{Introduction} 
The degeneration problem of modules has been studied by many authors \cite{B96, R86, Y02, Y04, Z99}. 
For modules over an artinian algebra, it has been studied by Bongartz \cite{B96} in relation with the Auslander-Reiten quiver. 
In general but for modules over a commutative noetherian ring, the second author \cite{Y02} generalized the theory of Bongartz to maximal Cohen-Macaulay modules and he has shown that any extended degenerations of maximal Cohen-Macaulay modules are fundamentally obtained by degenerations of Auslander-Reiten sequences under certain special conditions. 
For this, several order relations for modules, such as the hom order, the degeneration order, the extension order and the AR order, were introduced, 
and the connection among them has been studied. 

The purpose of this paper is to give several examples of degenerations of maximal Cohen-Macaulay modules and to show how we can describe them. 
We will be able to give the complete description of degenerations over a ring of even-dimensional simple hypersurface singularity of type ($A_n$). 
In fact, we will show that all degenerations of maximal Cohen-Macaulay modules over such a ring are given by extensions (Theorem \ref{type A}). 
This result depends heavily on the recent work of the second author about 
 the stable analogue of degenerations for Cohen-Macaulay modules over a Gorenstein local algebra \cite{Y10}.

In Section \ref{Orders} we also investigate the relation among the extended versions of the degeneration order, the extension order and the AR order. 
As a result  we shall show that if $R$ is of finite Cohen-Macaulay representation type, then all these extended orders are identical when restricted on the maximal Cohen-Macaulay modules (Theorem \ref{theorem of AR order}). 

%%%%%%%%%%%%%%%%%%%%%%%%%%%%%%%%%%%%%%%%%%%%%%%%%%%%%%%%%
%%%%%%%%%%%%%%%%%%%%%%%%%%%%%%%%%%%%%%%%%%%%%%%%%%%%%%%%%
%%%%%%%%%%%%%%%%%%%%%%%%%%%%%%%%%%%%%%%%%%%%%%%%%%%%%%%%%
%%%%%%%%%%%%%%%%%%%%%%% Section 1 %%%%%%%%%%%%%%%%%%%%%%%
%%%%%%%%%%%%%%%%%%%%%%%%%%%%%%%%%%%%%%%%%%%%%%%%%%%%%%%%%
%%%%%%%%%%%%%%%%%%%%%%%%%%%%%%%%%%%%%%%%%%%%%%%%%%%%%%%%%
%%%%%%%%%%%%%%%%%%%%%%%%%%%%%%%%%%%%%%%%%%%%%%%%%%%%%%%%%
%%%%%%%%%%%%%%%%%%%%%%%%%%%%%%%%%%%%%%%%%%%%%%%%%%%%%%%%%
\section{Preliminaries and the first examples}\label{First example}

In this section, we recall the definition of degeneration and state several known results on degenerations. 
For the detail, we recommend the reader to refer to \cite{Y04, Y10}.

%%%%%%%%%%%%%%%%%%%%%%%%%%%%%%%%%%%%%%%%%%%%%
%%%%%%%%%%%%%%%%%%%%%%%%%%%%%%%%%%%%%%%%%%%%%
%%%%%%%%%% Definition of degenertions %%%%%%%
%%%%%%%%%%%%% of f. g. modules %%%%%%%%%%%%%%
%%%%%%%%%%%%%%%%%%%%%%%%%%%%%%%%%%%%%%%%%%%%%
%%%%%%%%%%%%%%%%%%%%%%%%%%%%%%%%%%%%%%%%%%%%%
\begin{definition}\label{degeneration}
Let $R$ be a noetherian algebra over a field $k$, and let $M$ and $N$ be finitely generated left $R$-modules. 
We say that $M$ degenerates to $N$, or $N$ is a degeneration of $M$, if there is a discrete valuation ring $(V, tV, k)$ that is a $k$-algebra (where $t$ is a prime element) and a finitely generated left $R\otimes _{k} V$-module $Q$ which satisfies the following conditions: 

\begin{itemize}
\item[(1)] $Q$ is flat as a $V$-module.

\item[(2)] $Q/tQ \cong N$ as a left $R$-module. 

\item[(3)] $Q[1/t] \cong M\otimes _{k} V[1/t]$ as a left $R\otimes _{k} V[1/t]$-module.
\end{itemize}
\end{definition}

The following characterization of degenerations has been proved by the second author \cite{Y04}. 
See also \cite{R86, Z00}. 
%%%%%%%%%%%%%%%%%%%%%%%%%%%%%%%%%%%%%%%%%%%%%
%%%%%%%%%%%%%%%%%%%%%%%%%%%%%%%%%%%%%%%%%%%%%
%%%%%%%%%%%%%%%%%%%%%%%%%%%%%%%%%%%%%%%%%%%%%
%%%%%%%% Theorem of Zwara's sequence %%%%%%%%
%%%%%%%%%%%%%%%%%%%%%%%%%%%%%%%%%%%%%%%%%%%%%
%%%%%%%%%%%%%%%%%%%%%%%%%%%%%%%%%%%%%%%%%%%%%
\begin{theorem}\label{Zwara sequence}\cite[Theorem 2.2]{Y04}
The following conditions are equivalent for finitely generated left $R$-modules $M$ and $N$.

\begin{itemize}
\item[(1)] $M$ degenerates to $N$. 

\item[(2)] There is a short exact sequence of finitely generated left $R$-modules
$$
\begin{CD}
0 @>>> Z @>{\tiny 
\begin{pmatrix}
\varphi \\
\psi  \\
\end{pmatrix} 
}>> M\oplus Z \ @>>> N @>>> 0,   \\ 
\end{CD}
$$ 
such that the endomorphism $\psi$ of $Z$ is nilpotent, {\it i.e.} $\psi ^n = 0$ for $n\gg 1$.
\end{itemize}
\end{theorem}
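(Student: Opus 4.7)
The plan is to establish the equivalence by proving each direction separately, following the approach of Zwara for artin algebras and its extension to the commutative noetherian setting. The implication $(2) \Rightarrow (1)$ is a direct construction of a degeneration family, whereas $(1) \Rightarrow (2)$ is more delicate and forms the technical core.

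For $(2) \Rightarrow (1)$, I would take the DVR $V = k[t]_{(t)}$ with uniformiser $t$ and define
$$
Q := \mathrm{coker}\!\left( Z \otimes_k V \xrightarrow{\ \begin{pmatrix} \varphi \otimes 1 \\ \psi \otimes 1 - t\cdot \mathrm{id}_Z \end{pmatrix}\ } (M \oplus Z) \otimes_k V \right).
$$
Reducing modulo $t$ recovers exactly the given short exact sequence, yielding $Q/tQ \cong N$. After inverting $t$, the operator $\psi \otimes 1 - t \cdot \mathrm{id}_Z$ becomes invertible because $\psi$ is nilpotent, its inverse being $-t^{-1}\sum_{i \geq 0}(t^{-1}\psi)^i$, a terminating sum. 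Hence the $Z$-component of the defining map can be eliminated over $V[1/t]$, producing $Q[1/t] \cong M \otimes_k V[1/t]$ as an $R \otimes_k V[1/t]$-module. Finally, $V$-flatness of $Q$ reduces, since $V$ is a DVR, to $t$-torsion freeness, which follows from injectivity of the defining map, checked by reducing modulo $t$ and invoking the given exactness.

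For $(1) \Rightarrow (2)$, I would work inside the $R \otimes_k V[1/t]$-module $M \otimes_k V[1/t]$, in which $Q$ sits as a lattice via condition~(3). Finite generation of both lattices yields integers $m, n \geq 0$ with $t^n(M \otimes V) \subset Q$ and $t^m Q \subset M \otimes V$, so both $Q + M \otimes V$ and $Q \cap M \otimes V$ are finitely generated and differ from $Q$ and from $M \otimes V$ only by $t$-power-torsion $R$-modules. From this data I would build $Z$ as such a torsion module (e.g.\ a suitable quotient involving $Q$ and $M \otimes V$), with $\psi$ given by multiplication by $t$, which is automatically nilpotent. The two natural inclusions of $Q$ and $M \otimes V$ into their sum, combined with the flatness-derived exact sequence $0 \to Q \xrightarrow{t} Q \to N \to 0$, should then be assembled by snake-lemma bookkeeping into the required sequence $0 \to Z \to M \oplus Z \to N \to 0$.

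The main obstacle is the second direction, specifically pinning down $Z$ so that the middle term of the extracted sequence is exactly $M \oplus Z$ and the cokernel is $N$ rather than a related subquotient. Keeping careful track of the sandwich $t^n(M \otimes V) \subset Q \subset t^{-m}(M \otimes V)$ inside $M \otimes V[1/t]$, and of how multiplication by $t$ interacts with the comparison maps, is the delicate technical point; once the correct $Z$ is singled out, flatness of $Q$ over $V$ guarantees that the requisite short exact sequences fit into a diagram to which the snake lemma applies cleanly, and nilpotency of $\psi$ is then free of charge.
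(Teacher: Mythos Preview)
The paper does not give its own proof of this theorem; it is quoted verbatim from \cite[Theorem~2.2]{Y04} and used as a black box. There is therefore no in-paper argument to compare against.

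That said, your sketch is essentially the proof given in \cite{Y04}. The direction $(2)\Rightarrow(1)$ is exactly right: the one-parameter family $Q=\mathrm{coker}\bigl(\begin{smallmatrix}\varphi\otimes 1\\ \psi\otimes 1 - t\end{smallmatrix}\bigr)$ is the Riedtmann--Zwara deformation, and your verification of the three conditions is correct. For $(1)\Rightarrow(2)$ you have identified the right mechanism (view $Q$ as a lattice in $M\otimes_k K$, sandwich it against $M\otimes_k V$, take $Z$ to be the finite-length discrepancy with $\psi$ given by multiplication by $t$), and you are honest that the bookkeeping is the delicate point. In \cite{Y04} this is resolved by arranging $M\otimes_k V\subseteq Q$, setting $Z=Q/(M\otimes_k V)$, and reading off the desired sequence from the snake lemma applied to multiplication by $t$ on the short exact sequence $0\to M\otimes_k V\to Q\to Z\to 0$; one small extra step is needed to get the middle term to be $M\oplus Z$ rather than merely an extension of $Z$ by $M$. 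Your outline is on the right track and would become a complete proof once that diagram is written down carefully.
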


%%%%%%%%%%%%%%%%%%%%%%%%%%%%%%%%%%%%%%%%%%%%%
%%%%%%%%%%%%%%%%%%%%%%%%%%%%%%%%%%%%%%%%%%%%%
%%%%%%%%%%%%%%%%%%%%%%%%%%%%%%%%%%%%%%%%%%%%%
%%%%%%% Remark of Extaensions %%%%%%%%%%%%%%%
%%%%%%%%%%%%%%%%%%%%%%%%%%%%%%%%%%%%%%%%%%%%%
%%%%%%%%%%%%%%%%%%%%%%%%%%%%%%%%%%%%%%%%%%%%%
%%%%%%%%%%%%%%%%%%%%%%%%%%%%%%%%%%%%%%%%%%%%%
\begin{remark}\label{remark of extension}
Assume that there is an exact sequence of finitely generated left $R$-modules 
$$
\begin{CD}
0 @>>> L @>>> M @>>> N @>>> 0. 
\end{CD}
$$ 
Then $M$ degenerates to $L \oplus N$. 
See \cite[Remark 2.5]{Y04}. 
\end{remark}

%%%%%%%%%%%%%%%%%%%%%%%%%%%%%%%%%%%%%%%%%%%%%
%%%%%%%%%%%%%%%%%%%%%%%%%%%%%%%%%%%%%%%%%%%%%
%%%%%%%%%%%%%%%%%%%%%%%%%%%%%%%%%%%%%%%%%%%%%
We are mainly interested in degenerations of modules over commutative rings. 
Henceforth, in the rest of the paper, all the rings are assumed to be commutative.
%%%%%%%%%%%%%%%%%%%%%%%%%%%%%%%%%%%%%%%%%%%%%
%%%%%%%%%%%%%%%%%%%%%%%%%%%%%%%%%%%%%%%%%%%%%
%%%%%%%%%%%%%%%%%%%%%%%%%%%%%%%%%%%%%%%%%%%%%
%%%% Definitions of deg and ext order %%%%%%%
%%%%%%%%%%%%%%%%%%%%%%%%%%%%%%%%%%%%%%%%%%%%%
%%%%%%%%%%%%%%%%%%%%%%%%%%%%%%%%%%%%%%%%%%%%%
%%%%%%%%%%%%%%%%%%%%%%%%%%%%%%%%%%%%%%%%%%%%%
\begin{definition}\label{ext order}
Let  $M$  and $N$  be finitely generated modules over a commutative noetherian $k$-algebra $R$.  

\begin{itemize}
\item[(1)]
We denote by $M \dego N$ if $N$ is obtained from $M$ by iterative degenerations, i.e.  there is a sequence of finitely generated $R$-modules $L_0, L_1, \ldots , L_r$  such that  $M \cong L_0$, $N \cong L_r$  and  each $L_{i}$  degenerates to  $L_{i+1}$  for $0 \leq i < r$. 

\item[(2)]
We say that $M$ degenerates by an extension to $N$ 
if there is a short exact sequence $0 \to U \to M \to V \to 0$ of finitely generated $R$-modules such that $N \cong U \oplus N$. 

We denote by $M \exto N$ if $N$ is obtained from $M$ by iterative degenerations  by extensions, i.e.  there is a sequence of finitely generated $R$-modules $L_0, L_1, \ldots , L_r$  such that  $M \cong L_0$, $N \cong L_r$  and  each $L_{i}$  degenerates by an extension to  $L_{i+1}$  for $0 \leq i < r$. 
\end{itemize}
\end{definition}

If $R$  is a local ring, then  $\dego$ and $\exto$ are known to be partial orders on the set of isomorphism classes of finitely generated R-modules, which are called  the degeneration order and the extension order respectively. 
See \cite{Y02} for the detail.

\begin{remark}
By virtue of Remark \ref{remark of extension}, 
if  $M \exto N$ then  $M \dego N$. 
However the converse is not necessarily true. 

For example, consider a ring $R = k[[x, y]]/(x^2)$. 
A pair of matrices over $k[[x,y]]$; 
$$
(\varphi, \psi) = \left( \begin{pmatrix} x & y^2 \\ 0 & x \end{pmatrix}, 
\begin{pmatrix} x & -y^2 \\ 0 & x \end{pmatrix} \right)
$$
is a matrix factorization of the equation $x^2$, hence it gives a maximal Cohen-Macaulay $R$-module $N$ that is isomorphic to the ideal $(x, y^2)R$. 
Actually there is an exact sequence 
$$
\begin{CD}
 \cdots @>>>R^2  @>{\psi}>> R^2  @>{\varphi}>>R^2  @>{\psi}>> R^2  @>{\varphi}>> R^2 @>>> N @>>> 0. 
\end{CD}
$$
It is known that  $N$ is indecomposable. 
See \cite[Example (6.5)]{Y}.  
Now we deform the matrices $(\varphi, \psi)$ to 
$$
(\Phi, \Psi) = \left( \begin{pmatrix} x + ty & y^2 \\ -t^2 & x -ty \end{pmatrix}, \begin{pmatrix} x-ty  & -y^2 \\ t^2  & x+ty  \end{pmatrix} \right)
$$
over $R \otimes _kV$, where $V = k[t]_{(t)}$. 
Then, since  $(\Phi, \Psi)$  is still a matrix factorization of $x^2$ over the regular ring $k[[x, y]] \otimes _k V$, 
we have an exact sequence 
$$
\begin{CD}
\cdots @>{\Phi}>> (R\otimes _kV)^2 @>{\Psi}>> (R\otimes _kV)^2  @>{\Phi}>> (R\otimes _kV)^2 @>>> Q @>>> 0, 
\end{CD}
$$
where  $Q$  is a maximal Cohen-Macaulay module over $R \otimes _k V$, in particular  $Q$  is $V$-flat. 
Since $\Phi \otimes _V V/tV = \varphi$, it follows that $Q/tQ \cong N$.  
On the other hand, since $t^2$ is a unit in $R\otimes_k V[1/t]$, 
after an elementary transformation of matrices, we have $\Phi \otimes_V V[1/t] \cong \begin{pmatrix} 0 & 0 \\ 1 & 0 \end{pmatrix}$, hence $Q_t \cong R \otimes _k V[1/t]$. 
As a result we see that $R$ degenerates to $(x, y^2)R$ in this case, and hence  $R \dego (x, y^2)R$. 

In general if  $M \exto N$ and if $M \not\cong N$,  then $N$ is a non-trivial direct sum of modules.  
Since  $N \cong (x, y^2)R$  is indecomposable, we see that  $R \exto (x, y^2)R$  can never happen. 
\end{remark}

%%%%%%%%%%%%%%%%%%%%%%%%%%%%%%%%%%%%%%%%%%%%%
%%%%%%%%%%%%%%%%%%%%%%%%%%%%%%%%%%%%%%%%%%%%%
%%%%%%%%%%%%%%%%%%%%%%%%%%%%%%%%%%%%%%%%%%%%%
%%%%%%% transitive condition %%%%%%%%%%%%%%%%
%%%%%%%%%%%%%%%%%%%%%%%%%%%%%%%%%%%%%%%%%%%%%
%%%%%%%%%%%%%%%%%%%%%%%%%%%%%%%%%%%%%%%%%%%%%
%%%%%%%%%%%%%%%%%%%%%%%%%%%%%%%%%%%%%%%%%%%%%
We also note that if $R$ is an artinian $k$-algebra, then the degeneration for finitely generated modules is known to be transitive by Zwara \cite{Z98}. 
Namely, if $L$ degenerates to $M$ and if $M$ degenerates to $N$, then $L$ degenerates to $N$. 
In general if $R$  is not necessarily artinian, we do not know whether this transitivity property holds or not. 
However, it is rather easy to see the following:

\begin{remark}
If $L$  degenerates to $M$ and if $M$ degenerates by an extension to $N$, then $L$ degenerates to $N$. 
\end{remark}

%%%%%%%%%%%%%%%%%%%%%%%%%%%%%%%%%%%%%%%%%%%%%
%%%%%%%%%%%%%%%%%%%%%%%%%%%%%%%%%%%%%%%%%%%%%
%%%%%%%%%%%% R/I %%%%%%%%%%%%%%%%%%%%%%%%%%%%
%%%%%%%%%%%%%%%%%%%%%%%%%%%%%%%%%%%%%%%%%%%%%
%%%%%%%%%%%%%%%%%%%%%%%%%%%%%%%%%%%%%%%%%%%%%
Now we note that the following lemma holds.  

\begin{lemma}\label{lemma R/I} 
Let $I$  be an ideal of a noetherian $k$-algebra $R$, 
and let $M$ and $N$ be finitely generated $R/I$-modules. 
Then  $M$ degenerates (resp. degenerates by an extension) to $N$ as $R$-modules  if and only if so does as $R/I$-modules. 
\end{lemma}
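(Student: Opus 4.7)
The plan is to work directly from Definition \ref{degeneration}, the guiding observation being that $V$-flatness of the witnessing module, together with the generic fiber being killed by $I$, forces $I$ to annihilate the module on the nose.

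For the degeneration statement, the ``if'' direction is immediate: any finitely generated $(R/I)\otimes_k V$-module $Q$ witnessing a degeneration of $M$ to $N$ over $R/I$ is, via the surjection $R\otimes_k V\twoheadrightarrow (R/I)\otimes_k V$, a finitely generated $R\otimes_k V$-module, and the three conditions of Definition \ref{degeneration} (flatness, special fiber, generic fiber) are unaffected by restriction of scalars.

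For the converse, I would start from a witness $Q$ of the degeneration of $M$ to $N$ as $R$-modules. Condition (3) provides $Q[1/t]\cong M\otimes_k V[1/t]$, which is killed by $I$ because $M$ is an $R/I$-module; hence $IQ[1/t]=0$. On the other hand, $V$-flatness of $Q$ over the DVR $V$ means $Q$ is $V$-torsion free, so the localization map $Q\hookrightarrow Q[1/t]$ is injective. Combining these two facts forces $IQ=0$, so that $Q$ is automatically an $(R/I)\otimes_k V$-module and witnesses the degeneration over $R/I$ as well.

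For the ``degeneration by an extension'' statement, both directions are formal: a short exact sequence $0\to U\to M\to W\to 0$ with $N\cong U\oplus W$ is a sequence of $R$-modules if and only if it is a sequence of $R/I$-modules, since submodules and quotients of an $R/I$-module are again $R/I$-modules, and the splitting condition $N\cong U\oplus W$ is insensitive to the chosen category. I do not anticipate a genuine obstacle; the only delicate point is the verification $IQ=0$ in the converse direction of the degeneration statement, which is a direct consequence of flatness over a DVR.
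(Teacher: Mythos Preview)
Your proposal is correct and follows essentially the same route as the paper's own proof: both directions of the degeneration statement are handled exactly as you describe, with the key step being that $V$-flatness of $Q$ makes $Q \hookrightarrow Q[1/t]$ injective, so $IQ[1/t]=0$ forces $IQ=0$; the extension case is likewise dispatched by noting that submodules and quotients of $M$ inherit annihilation by $I$.
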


\begin{proof} 
Assume $M$ degenerates to $N$ as $R/I$-modules. 
Then there is a finitely generated $R/I \otimes _k V$-module $Q$ satisfying the conditions in Definition \ref{degeneration}. 
Regarding $Q$ as an $R \otimes _k V$-module, we can see that 
$M$ degenerates to $N$ as $R$-modules. 

Contrarily assume $M$ degenerates to $N$ as $R$-modules. 
Then the $R \otimes _k V$-module $Q$ satisfying the conditions in Definition \ref{degeneration}  is an $R/I \otimes _k V$-module. 
In fact, since  $IM = 0$ and since  $Q [1/t] \cong M \otimes _k V[1/t]$, we have $IQ[1/t]=0$. 
On the other hand, since  $Q$ is V-flat, the natural mapping  $Q \to Q[1/t]$  is injective. 
Therefore we see that  $IQ =0$, which shows that  $Q$ is a module over $R/I\otimes _k V$. 
Then $Q$ satisfies all the conditions in Definition \ref{degeneration} as $R/I$-modules. 
Hence  $M$ degenerates to $N$ as $R/I$-modules. 

Assume $M$ degenerates by an extension to $N$ as $R$-modules. 
Then there is an exact sequence $0 \to U \to M \to V \to 0$ with  $N \cong U \oplus V$. 
Since  $IM =0$, it follows that $U$ and $V$ are also $R/I$-modules and the short exact sequence is so as $R/I$-modules. 
Hence  $M$ degenerates by an extension to $N$ as $R/I$-modules.  
\end{proof}

By iterative use of this lemma we have the following corollary. 

\begin{corollary}\label{cor R/I}
As in the lemma,  let $I$  be an ideal of a noetherian $k$-algebra $R$, 
and let $M$ and $N$ be finitely generated $R/I$-modules. 
Then  $M \dego N$ (resp. $M \exto N$) as $R$-modules if and only if so does as $R/I$-modules. 
\end{corollary}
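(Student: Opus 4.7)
The plan is to prove both implications by induction on the length $r$ of the chain $L_0,\ldots,L_r$, applying Lemma \ref{lemma R/I} at each step. The ``if'' direction is immediate: given a chain of $R/I$-modules with each consecutive pair related by a degeneration (resp.\ a degeneration by an extension) as $R/I$-modules, Lemma \ref{lemma R/I} promotes each step to a degeneration as $R$-modules, and concatenating yields $M\dego N$ (resp.\ $M \exto N$) as $R$-modules.

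The subtle point is the ``only if'' direction, where a priori Lemma \ref{lemma R/I} does not apply to an interior step $L_i \rightsquigarrow L_{i+1}$ because the $L_i$ are only guaranteed to be $R$-modules. The key observation, which I would extract from the proof of Lemma \ref{lemma R/I}, is that the $R/I$-module property propagates along a single degeneration step. Suppose $L_i$ is an $R/I$-module and $L_i$ degenerates to $L_{i+1}$ as $R$-modules via a $V$-flat $R\otimes_k V$-module $Q$ with $Q[1/t]\cong L_i\otimes_k V[1/t]$ and $Q/tQ\cong L_{i+1}$. Since $IL_i = 0$, we get $IQ[1/t] = 0$; the $V$-flatness of $Q$ makes the natural map $Q\to Q[1/t]$ injective, so $IQ = 0$, and therefore $L_{i+1}\cong Q/tQ$ is again an $R/I$-module. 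The extension case is even easier: if $0\to U\to L_i\to W\to 0$ witnesses $L_i$ degenerating by an extension to $L_{i+1}\cong U\oplus W$, then $IL_i=0$ forces $IU=0$ and $IW=0$, so $L_{i+1}$ is an $R/I$-module.

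With this in hand, starting from $L_0 = M$, which is an $R/I$-module by hypothesis, an easy induction shows every $L_i$ in the chain is annihilated by $I$. Now Lemma \ref{lemma R/I} applies to each step $L_i\rightsquigarrow L_{i+1}$, and we conclude $M\dego N$ (resp.\ $M\exto N$) as $R/I$-modules. The only real hurdle is the propagation argument above; once it is recorded, the corollary is purely formal.
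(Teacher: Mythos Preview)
Your proof is correct and follows essentially the same approach as the paper, which simply states that the corollary follows ``by iterative use of this lemma.'' You have made explicit the one nontrivial point the paper leaves implicit: that the intermediate modules $L_i$ in a chain witnessing $M \dego N$ over $R$ are automatically $R/I$-modules, via the same $V$-flatness argument appearing in the proof of Lemma~\ref{lemma R/I}.
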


%%%%%%%%%%%%%%%%%%%%%%%%%%%%%%%%%%%%%%%%%%%%%
%%%%%%%%%%%%%%%%%%%%%%%%%%%%%%%%%%%%%%%%%%%%%
%%%%%%%%%%%%%%%%%%%%%%%%%%%%%%%%%%%%%%%%%%%%%
%%%%% Remark of degenerations %%%%%%%%%%%%%%%
%%%%%%%%%%%%%%%%%%%%%%%%%%%%%%%%%%%%%%%%%%%%%
%%%%%%%%%%%%%%%%%%%%%%%%%%%%%%%%%%%%%%%%%%%%%
%%%%%%%%%%%%%%%%%%%%%%%%%%%%%%%%%%%%%%%%%%%%%

We make several other remarks on degenerations for the later use. 

\begin{remark}\label{remark of degenerations}
Let $R$ be a noetherian $k$-algebra, and let $M$ and $N$ be finitely generated $R$-modules. 
Suppose that $M$ degenerates to $N$. 
Then the following hold. 
\begin{itemize}
\item[(1)] The modules $M$ and $N$ give the same class in the Grothendieck group, {\it i.e.} $[M] = [N]$ as an element of $K_{0} (\modR)$, where $\modR$ denotes the category of finitely generated $R$-modules and $R$-homomorphisms. 
(See \cite[Remark 2.3 (1)]{Y10}). 

\item[(2)]  The $i$th Fitting ideal of $M$ contains that of $N$ for all $i \geq 0$. 
Namely, denoting the $i$th Fitting ideal of an $R$-module $M$ by $\Fitt_{i}^{R} (M)$, we have $\Fitt_{i}^{R} (M) \supseteq \Fitt_{i}^{R} (N)$ for all $i \geqq 0$. 
(See \cite[Theorem 2.5]{Y10}). 
\end{itemize}
\end{remark}

%%%%%%%%%%%%%%%%%%%%%%%%%%%%%%%%%%%%%%%%%%%%%
%%%%%%%%%%%%%%%%%%%%%%%%%%%%%%%%%%%%%%%%%%%%%
%%%%%%%%%%%%%%%%%%%%%%%%%%%%%%%%%%%%%%%%%%%%%
%%%%%%%%%%%%%% Krull-Schmidt %%%%%%%%%%%%%%%%
%%%%%%%%%%%%%%%%%%%%%%%%%%%%%%%%%%%%%%%%%%%%%
%%%%%%%%%%%%%%%%%%%%%%%%%%%%%%%%%%%%%%%%%%%%%
%%%%%%%%%%%%%%%%%%%%%%%%%%%%%%%%%%%%%%%%%%%%%
If $R$ is a complete (more generally, Henselian) local ring, then it is known that the category $\modR$ of finitely generated $R$-modules and $R$-homomorphisms  is a Krull-Schmidt category, i.e. every finitely generated $R$-module is uniquely a finite direct sum of indecomposable $R$-modules. 
To use this property, all the rings considered below are assumed to be complete local rings.

\vspace{6pt}
%%%%%%%%%%%%%%%%%%%%%%%%%%%%%%%%%%%%%%%%%%%%%
%%%%%%%%%%%%%%%%%%%%%%%%%%%%%%%%%%%%%%%%%%%%%
%%%%%%%%%%%%%%%%%%%%%%%%%%%%%%%%%%%%%%%%%%%%%
%%%%%%%%%%%%%%%%%%%%%%%%%%%%%%%%%%%%%%%%%%%%%
Now we give an example of modules of finite length for which we can easily describe the degeneration.

Let $R = k[[ x ]]$ be a formal power series ring over a field $k$ with one variable $x$ and let $M$ be an $R$-module of length $n$. 
It is easy to see that there is an isomorphism 
\begin{equation}\label{partition1}
M \cong R/(x^{p_1}) \oplus \cdots \oplus R/(x^{p_n}),
\end{equation}  
where 
\begin{equation}\label{partition2}
p_1 \geq p_2 \geq \cdots \geq p_n \geq 0  \quad \text{and} \quad \sum _{i = 1}^{n}p_i = n. 
\end{equation}  
In this case the finite presentation of $M$ is given as follows:
$$
\begin{CD}
0 @>>> R^{n} @>{\left( 
\begin{array}{ccc}
x^{p_1} & &  \\
 & \ddots &  \\
 &  & x^{p_n} \\
\end{array}
\right)
}>> R^n \ @>>> M @>>> 0. \\
\end{CD}
$$
Note that we can easily compute the $i$th Fitting ideal of $M$ from this presentation; 
$$
\Fitt_{i}^{R} (M) = (x^{p_{i+1} + \cdots + p_n}) \ \ (i \geq 0).
$$
We denote by $p_M$ the sequence $(p_1, p_2, \cdots , p_n)$ of non-negative integers. 
Recall that such a sequence satisfying (\ref{partition2}) is called a partition of $n$. 

Conversely, given a partition $p = (p_1, p_2, \cdots , p_n)$ of $n$,  we can associate  an $R$-module of length $n$  by (\ref{partition1}), which we denote by  $M(p)$. 
In such a way we see that there is a one-one correspondence between the set of partitions of $n$  and the set of isomorphism classes of  $R$-modules of length $n$.   

%%%%%%%%%%%%%%%%%%%%%%%%%%%%%%%%%%%%%%%%%%%%%
%%%%%%%%%%%%%%%%%%%%%%%%%%%%%%%%%%%%%%%%%%%%%
%%%%%%%%%%%%%%%%%%%%%%%%%%%%%%%%%%%%%%%%%%%%%
%%%%% Definition of dominance order %%%%%%%%%
%%%%%%%%%%%%%%%%%%%%%%%%%%%%%%%%%%%%%%%%%%%%%
%%%%%%%%%%%%%%%%%%%%%%%%%%%%%%%%%%%%%%%%%%%%%
%%%%%%%%%%%%%%%%%%%%%%%%%%%%%%%%%%%%%%%%%%%%%
\begin{definition}\label{dominance order}
Let $n$ be a positive integer and let  $p = (p_1,p_2, \cdots , p_n)$ and $q = (q_1,q_2, \cdots , q_n)$ be partitions of $n$. 
Then we denote  $p \succeq q$  if it satisfies $\sum _{i = 1}^{j}p_i \geq  \sum _{i = 1}^{j}q_i$ for all $1 \leq j \leq n$. 
\end{definition}

We note that $\succeq $ is known to be a partial order on the set of partitions of $n$  and called the dominance order. 
See for example \cite[page 7]{Mac}.

\vspace{6pt}
It is known that the degeneration problem for $R$-modules of length $n$ is equivalent to the degeneration problem for Jordan canonical forms of square matrices of size $n$. 
In the following proposition we show the degeneration order for $R$-modules of length $n$  coincides with the opposite of the dominance order of corresponding partitions. 
Note that if  $M$ and $N$ are $R$-modules of finite length and if $M$ degenerates to $N$, then the length of $M$ equals the length of $N$, since  $[M] = [N]$ in the Grothendieck group.   

%%%%%%%%%%%%%%%%%%%%%%%%%%%%%%%%%%%%%%%%%%%%%
%%%%%%%%%%%%%%%%%%%%%%%%%%%%%%%%%%%%%%%%%%%%%
%%%%%%%%%%%%%%%%%%%%%%%%%%%%%%%%%%%%%%%%%%%%%
%%%% Degenerations problem of %%%%%%%%%%%%%%%
%%%%%%%%%% Jordan canonical forms %%%%%%%%%%%
%%%%%%%%%%%%%%%%%%%%%%%%%%%%%%%%%%%%%%%%%%%%%
%%%%%%%%%%%%%%%%%%%%%%%%%%%%%%%%%%%%%%%%%%%%%

\begin{proposition}\label{Jordan}
Let $R = k[[ x ]]$ as above, and let $M$, $N$ be $R$-modules of length $n$. 
Then the following conditions are equivalent: 
\begin{itemize}
\item[$(1)$]
$M \dego N$, 
\item[$(2)$]
$M \exto N$, 
\item[$(3)$]
$p_M \succeq p_N$.
\end{itemize}
\end{proposition}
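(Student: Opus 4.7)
The plan is to prove the three implications separately, with the main content in (3) $\Rightarrow$ (2). First, (2) $\Rightarrow$ (1) is immediate from Remark \ref{remark of extension}, since any extension degeneration is in particular a degeneration, and hence any iterated chain of the former is an iterated chain of the latter.

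For (1) $\Rightarrow$ (3), I would exploit the Fitting-ideal invariant of Remark \ref{remark of degenerations}(2). Writing $M = L_0, L_1, \dots, L_r = N$ for a degeneration chain, the step-wise containments $\Fitt_i^R(L_s) \supseteq \Fitt_i^R(L_{s+1})$ compose to $\Fitt_i^R(M) \supseteq \Fitt_i^R(N)$ for every $i \geq 0$. By Remark \ref{remark of degenerations}(1) the classes $[M]=[N]$ in $K_0(\modR)$ agree, which in particular forces $M$ and $N$ to have the same length $n$, so $p_M$ and $p_N$ are both partitions of $n$. Substituting $\Fitt_i^R(M(p)) = (x^{p_{i+1} + \cdots + p_n})$ and using that $(x^a) \supseteq (x^b) \Leftrightarrow a \leq b$ in $k[[x]]$, the containment becomes $\sum_{k > i}(p_M)_k \leq \sum_{k > i}(p_N)_k$, which after subtraction from the total sum $n$ is exactly $p_M \succeq p_N$.

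The implication (3) $\Rightarrow$ (2) is the main content. The strategy is to decompose $p_M \succeq p_N$ into a finite sequence of elementary single-box moves $p \to p'$, with $p'_i = p_i - 1$, $p'_j = p_j + 1$ for some $i < j$, and then realize each such move by an explicit extension degeneration. Combinatorially, given $p \succ q$, set $S_m = \sum_{k \leq m}(p_k - q_k)$ and pick an interval $[a, b]$ with $S_a, \dots, S_{b-1} \geq 1$ and $S_{a-1} = S_b = 0$; inside this interval $p_a \geq q_a + 1 \geq q_b + 1 \geq p_b + 2$, so $p$ strictly descends between $a$ and $b$. Taking $i$ minimal in $[a, b-1]$ with $p_i > p_{i+1}$ and $j$ maximal in $[a+1, b]$ with $p_{j-1} > p_j$ produces indices with $p_i \geq p_j + 2$ and a partition $p'$ with $p \succ p' \succeq q$. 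Algebraically, for such $i, j$ the sequence
$$
0 \longrightarrow R/(x^{p_j+1}) \xrightarrow{\alpha} R/(x^{p_i}) \oplus R/(x^{p_j}) \xrightarrow{\beta} R/(x^{p_i-1}) \longrightarrow 0,
$$
with $\alpha(1) = (x^{p_i - p_j - 1}, 1)$ and $\beta(u, v) = u - x^{p_i - p_j - 1} v$, is short exact, as a routine check using $p_i \geq p_j + 2$ shows (injectivity of $\alpha$, surjectivity of $\beta$, $\beta\alpha=0$, and a dimension count for exactness in the middle). Direct-summing with the unchanged components $\bigoplus_{k \neq i, j} R/(x^{p_k})$ exhibits $M(p)$ as degenerating by an extension to $M(p')$ in the sense of Definition \ref{ext order}(2), and iterating along the chain of elementary moves yields $M \exto N$.

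The main obstacle is the combinatorial step: ensuring that the prescribed choice of $(i, j)$ satisfies both the gap condition $p_i \geq p_j + 2$ (so that $p'\neq p$, i.e., the move is non-trivial) and the dominance $p' \succeq q$ (so that the iteration strictly decreases toward $p_N$). Once these are pinned down, the explicit short exact sequence above is essentially a one-line verification.
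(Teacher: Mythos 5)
Your proposal is correct and follows essentially the same route as the paper: $(1)\Rightarrow(3)$ via the Fitting-ideal containments of Remark \ref{remark of degenerations}(2) composed along the degeneration chain, and $(3)\Rightarrow(2)$ by reducing the dominance relation to elementary single-box moves $p_i\mapsto p_i-1$, $p_j\mapsto p_j+1$ with $p_i\ge p_j+2$ and realizing each by an explicit short exact sequence $0\to R/(x^{b+1})\to R/(x^a)\oplus R/(x^b)\to R/(x^{a-1})\to 0$ (the paper writes the same extension with sub and quotient interchanged, and phrases the combinatorics in terms of predecessors under the dominance order, which your interval argument with the partial sums $S_m$ simply makes explicit).
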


%%%%%%%%%%%%%%%%%%%%%%%%%%%%%%%%%%%%%%%%%%%%%
%%%%%%%%%%%%%%%%%%%%%%%%%%%%%%%%%%%%%%%%%%%%%
%%%%% Proof for Prop. %%%%%%%%%%%%%%%%%%%%%%%
%%%%%%%%%%%%%%%%%%%%%%%%%%%%%%%%%%%%%%%%%%%%%
%%%%%%%%%%%%%%%%%%%%%%%%%%%%%%%%%%%%%%%%%%%%%
\begin{proof}
First of all, we assume $M$ degenerates to $N$, and let $p_M = (p_1, p_2, \cdots ,p_n)$ and $p_N = (q_1, q_2, \cdots ,q_n)$.  
Then, by definition, we have the equalities of the Fitting ideals;  $\Fitt_{i}^{R} (M) = (x^{p_{i+1} + \cdots + p_n})$ and $\Fitt_{i}^{R} (M) = (x^{q_{i+1} + \cdots + q_n})$ for all $i \geq 0$. 
Since $M$ degenerates to $N$, it follows from Remark \ref{remark of degenerations}(2) that  $\Fitt_{i}^{R} (M) \supseteq \Fitt_{i}^{R} (N)$ for all $i$. 
Thus $p_{i+1} + \cdots + p_n \leq q_{i+1} + \cdots + q_{n}$. 
Since $\sum _{i = 1}^{n}p_i = n = \sum _{i = 1}^{n}q_i$, it follows that $p_1 + \cdots + p_i \geq  q_1 + \cdots + q_i$ for all $i \geq 0$. 
Therefore  $p_M \succeq p_N$. 

Secondly, assume $M \dego N$. 
Then there are $R$-modules  $L_0, L_1, \ldots , L_r$  such that  $M \cong L_0$, $N \cong L_r$  and  each $L_{i}$  degenerates to  $L_{i+1}$  for $0 \leq i < r$. 
It then follows from the above that 
$p_{L_0} \succeq p_{L_1} \succeq \cdots  \succeq p_{L_r}$. 
Since $\succeq$  is a partial order, we have  $p_{M} \succeq p_{N}$. 
Thus we have proved the implication $(1) \Rightarrow (3)$. 

Finally we shall prove $(3) \Rightarrow (2)$. 
To this end let $p = (p_1,p_2, \cdots , p_n)$ and $q = (q_1,q_2, \cdots , q_{n})$ be partitions of $n$. 
Note that it is enough to prove that the corresponding $R$-module  $M(p)$  degenerates by an extension to $N(q)$  whenever  $q$ is a predecessor of $p$ under the dominance order. 
(Recall that $q$ is called a predecessor of $p$ if $p \succeq q$  and there are no partitions $r$ with  $p \succeq r \succeq q$ other than  $p$ and $q$.) 

Assume that  $q$ is a predecessor of $p$ under the dominance order. 
Then it is easy to see that  there are numbers  $1 \leq i < j \leq  n$  with  $p_i - p_j \geq 2$, $p_i > p_{i+1}$, $p_{j-1} > p_{j}$ such that 
the equality $q = (p_1, \cdots ,p_{i}-1, p_{i+1},\cdots ,p_{j} + 1, \cdots , p_n)$ holds. 
In this case, setting  $L = M((p_1, \cdots ,p_{i-1}, p_{i+1},\cdots ,p_{j-1}, p_j,  \cdots , p_n))$,  we have 
$M(p) = L \oplus M((p_i, p_j))$  and  $M(q) = L \oplus M((p_i-1, p_j+1))$. 
Note that, in general, if $M$ degenerates by an extension to $N$, then $M \oplus L$ degenerates by an extension to $N \oplus L$,  for any $R$-modules $L$. 
Hence it is enough to show that  $M((a, b))$   degenerates by an extension  to   $M((a-1, b+1))$ if  $a \geq b+2$. 
However there is a short exact sequence of the form: 
$$
\begin{CD}
0 @>>> R/(x^{a-1}) @>>> R/(x^{a}) \oplus R/(x^{b}) \ @>>> R/(x^{b+1}) @>>> 0 \\ @. 1  @>>> (x, 1) @. @. 
\end{CD}
$$
Thus  $M((a, b))= R/(x^{a}) \oplus R/(x^{b})$ degenerates by an extension to $M((a-1, b+1)) = R/(x^{a-1}) \oplus R/(x^{b+1})$. 
\end{proof}

Combining Proposition \ref{Jordan} with Corollary \ref{cor R/I}, 
we have the following corollary which will be used in the next section. 

\begin{corollary}\label{cor Jordan}
Let $R = k[[ x ]]/(x^m)$,  where $k$ is a field and $m$ is a positive integer, and  let $M$, $N$ be finitely generated $R$-modules. 
Then  $M \dego N$  holds if and only if $M \exto N$ holds. 
\end{corollary}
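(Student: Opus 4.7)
The plan is to reduce the corollary to Proposition \ref{Jordan} via Corollary \ref{cor R/I}, using the observation that $R = k[[x]]/(x^m)$-modules are automatically of finite length over $k[[x]]$.

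First I would observe that any finitely generated $R$-module $M$ can be viewed as a finitely generated $k[[x]]$-module annihilated by $x^m$; in particular, it has finite length as a $k[[x]]$-module (bounded by $m$ times the number of generators). Thus Proposition \ref{Jordan} applies to $M$ and $N$ regarded as $k[[x]]$-modules.

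Next, by Corollary \ref{cor R/I} applied with the ideal $I = (x^m) \subset k[[x]]$, the relation $M \dego N$ holds over $R$ if and only if it holds over $k[[x]]$, and analogously for $\exto$. Combining this with the equivalence of $(1)$ and $(2)$ in Proposition \ref{Jordan}, the two relations over $R$ agree.

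The implication $\exto \Rightarrow \dego$ is immediate from Remark \ref{remark of extension} and iteration, so the content is really the converse, which is supplied by Proposition \ref{Jordan}. I do not anticipate any genuine obstacle; the only subtlety is to make sure the finite-length hypothesis used in Proposition \ref{Jordan} is automatic here, which it is because multiplication by $x^m$ annihilates every finitely generated $R$-module.
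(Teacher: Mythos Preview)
Your proposal is correct and follows exactly the approach the paper intends: the paper simply says the corollary is obtained by ``combining Proposition \ref{Jordan} with Corollary \ref{cor R/I},'' and you have spelled out precisely that reduction, including the observation that finitely generated $k[[x]]/(x^m)$-modules are automatically of finite length over $k[[x]]$.
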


\vspace{6pt}
%%%%%%%%%%%%%%%%%%%%%%%%%%%%%%%%%%%%%%%%%%%%%%%%%%%%%%%%%
%%%%%%%%%%%%%%%%%%%%%%%%%%%%%%%%%%%%%%%%%%%%%%%%%%%%%%%%%
%%%%%%%%%%%%%%%%%%%%%%%%%%%%%%%%%%%%%%%%%%%%%%%%%%%%%%%%%
%%%%%%%%%%%%%%%%%%%%%%% Section 2 %%%%%%%%%%%%%%%%%%%%%%%
%%%%%%%%%%%%%%%%%%%%%%%%%%%%%%%%%%%%%%%%%%%%%%%%%%%%%%%%%
%%%%%%%%%%%%%%%%%%%%%%%%%%%%%%%%%%%%%%%%%%%%%%%%%%%%%%%%%
%%%%%%%%%%%%%%%%%%%%%%%%%%%%%%%%%%%%%%%%%%%%%%%%%%%%%%%%%
%%%%%%%%%%%%%%%%%%%%%%%%%%%%%%%%%%%%%%%%%%%%%%%%%%%%%%%%%
\section{The Second examples}\label{Second example}
Let  $k$  be a field and 
 $R = k[[ x_0, x_1, x_2, \cdots , x_d ]] / (f)$, 
 where $f$ is a polynomial of the form 
$$
f = x_0 ^{n + 1} + x_1 ^2 +x_2 ^2 + \cdots + x_d ^2  \qquad (n \geq 1).
$$
Recall that such a ring $R$ is call the ring of simple singularity of type $(A_n)$. 
Note that $R$ is a Gorenstein complete local ring and has finite Cohen-Macaulay representation type (cf. \cite{Y}). 
In this section, we focus on the degeneration problem of maximal Cohen-Macaulay modules over the ring $R$ of simple singularity of type $(A_n)$ and of even dimension. 
The main result of this section is the following whose proof will be given in the last part of this section.

%%%%%%%%%%%%%%%%%%%%%%%%%%%%%%%%%%%%%%%%%%%%%
%%%%%%%%%%%%%%%%%%%%%%%%%%%%%%%%%%%%%%%%%%%%%
%%%%%%%%%%%%%%%%%%%%%%%%%%%%%%%%%%%%%%%%%%%%%
%%%%% Degeneration problem of %%%%%%%%%%%%%%%
%%%%%%%%% (A_n) representation type %%%%%%%%%
%%%%%%%%%%%%%%%%%%%%%%%%%%%%%%%%%%%%%%%%%%%%%
%%%%%%%%%%%%%%%%%%%%%%%%%%%%%%%%%%%%%%%%%%%%%
\begin{theorem}\label{type A}
Let  $k$ be an algebraically closed field of characteristic $0$ and 
let $R = k[[ x_0, x_1, x_2, \cdots , x_d ]] / (x_0 ^{n + 1} + x_1 ^2 +x_2 ^2 + \cdots + x_d ^2)$ as above, 
where we assume that $d$ is even. 
For maximal Cohen-Macaulay $R$-modules $M$ and $N$, if $M \dego N$, then  $M \exto N$.
\end{theorem}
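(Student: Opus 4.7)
The plan is to reduce the problem to the Artinian base case $R_0 = k[[x_0]]/(x_0^{n+1})$ via Knörrer periodicity, apply Corollary \ref{cor Jordan} there, and then lift the resulting chain of extension degenerations back to $R$.

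First, I would pass from $\CMR$ to the stable category $\SCMR$ using the machinery of \cite{Y10}. An ordinary degeneration $M \dego N$ induces a stable degeneration $\underline{M} \sto \underline{N}$ in $\SCMR$, and the free-summand information lost in this passage is controlled by the Grothendieck group equality $[M] = [N]$ from Remark \ref{remark of degenerations}(1). Since $R$ is Gorenstein, $\SCMR$ is triangulated, and stable degeneration is witnessed by distinguished triangles
$$
\underline{Z} \longrightarrow \underline{M} \oplus \underline{Z} \longrightarrow \underline{N} \longrightarrow \underline{Z}[1]
$$
with nilpotent self-map on $\underline{Z}$, which is the triangulated analogue of Theorem \ref{Zwara sequence}.

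Next I would invoke Knörrer periodicity. Since $k$ is algebraically closed of characteristic zero and $d$ is even, the quadratic part $x_1^2 + \cdots + x_d^2$ splits after a linear change of variables into $d/2$ hyperbolic forms of type $uv$. Iterated Knörrer periodicity then yields a triangulated equivalence $\SCMR \simeq \SCM(R_0)$ with $R_0 = k[[x_0]]/(x_0^{n+1})$. Because stable degeneration is intrinsic to the triangulated structure, this equivalence transports $\underline{M} \sto \underline{N}$ to a stable degeneration $\underline{M}_0 \sto \underline{N}_0$ of $R_0$-modules. Over the Artinian Gorenstein ring $R_0$, every finitely generated module is MCM, so by Corollary \ref{cor Jordan} we have $M_0 \dego N_0 \Leftrightarrow M_0 \exto N_0$. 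Using the Y10 correspondence in the opposite direction on $R_0$, I would lift the stable degeneration to an honest degeneration of $R_0$-modules, replace it by a chain of extension degenerations by Corollary \ref{cor Jordan}, and then project this chain back into $\SCM(R_0)$.

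Finally, I would transport this chain of stable extension degenerations through the Knörrer equivalence back to $\SCMR$, obtaining distinguished triangles $\underline{U_i} \to \underline{L_i} \to \underline{V_i} \to \underline{U_i}[1]$ with $\underline{L_{i+1}} \cong \underline{U_i} \oplus \underline{V_i}$. Each such triangle is represented by a short exact sequence $0 \to U_i \to L_i \oplus R^{a_i} \to V_i \to 0$ of MCM modules, which is an extension degeneration in $\CMR$; stitching these together gives $M \exto N$.

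The main obstacle is the last step: lifting stable extension degenerations to honest short exact sequences of MCM modules while tracking the free summands $R^{a_i}$ that appear in each lift. Here one must combine the $K_0$-invariance of Remark \ref{remark of degenerations}(1) with the Krull-Schmidt uniqueness of the decomposition ``free part $\oplus$ stably indecomposable part'' for MCM modules over the complete local ring $R$, in order to ensure that the accumulated free summands cancel so that the chain realizes $M \exto N$ exactly, rather than merely $M \oplus R^a \exto N \oplus R^a$.
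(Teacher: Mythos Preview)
Your proposal is correct and follows essentially the same route as the paper: pass to the stable category, transport the stable degeneration through Kn\"orrer periodicity to the artinian ring $R_0=k[[x_0]]/(x_0^{n+1})$, use Theorem~\ref{conditions for stable degeneration} and Corollary~\ref{cor Jordan} there to convert to extension degenerations, then transport back and control free summands via $[M]=[N]$ and Krull--Schmidt. The paper packages your ``chain of stable extension degenerations'' and the free-summand bookkeeping into the partial order $\trio$ and Proposition~\ref{prop of tri.}, which is exactly the mechanism you sketch in your final paragraph.
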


%%%%%%%%%%%%%%%%%%%%%%%%%%%%%%%%%%%%%%%%%%%%%
%%%%%%%%%%%%%%%%%%%%%%%%%%%%%%%%%%%%%%%%%%%%%
%%%%%%%%%%%%%%%%%%%%%%%%%%%%%%%%%%%%%%%%%%%%%
To prove the theorem, we need several results concerning the stable degeneration which was introduced by the second author in \cite{Y10}.

\vspace{6pt}
Let $A$ be a commutative Gorenstein ring. 
We denote by $\CMA$ the category of all maximal Cohen-Macaulay $A$-module with all $A$-homomorphisms. 
And we also denote by $\SCMA$ the stable category of $\CMA$. 
Recall that the objects of $\SCMA$ are the same as those of $\CMA$, and the morphisms of $\SCMA$ are elements of $\SHom _A(M, N) = \Hom _A(M, N)/P(M, N)$ for $M, N \in \SCMA$, where $P(M, N)$ denote the set of morphisms from $M$ to $N$ factoring through projective $A$-modules.
For a maximal Cohen-Macaulay module $M$ we denote it by $\ulM$ to indicate that it is an object of $\SCMA$. 
Since $A$ is Gorenstein, it is known that $\SCMA$ has a structure of triangulated category.
By definition, $ \ulL \to \ulM \to \ulN \to \ulL [1]$ is a triangle in $\SCMA$  if and only if there is an exact $0 \to L' \to M' \to N' \to 0$  in  $\CMA$ with $\ulL ' \cong \ulL$, $\ulM ' \cong \ulM$  and $\ulN ' \cong \ulN$  in $\SCMA$. 
See \cite[Chapter 1]{H}, \cite[Section 4]{Y10} for the detail.

%%%%%%%%%%%%%%%%%%%%%%%%%%%%%%%%%%%%%%%%%%%%%
%%%%%%%%%%%%%%%%%%%%%%%%%%%%%%%%%%%%%%%%%%%%%
%%%%%%%%%%%%%%%%%%%%%%%%%%%%%%%%%%%%%%%%%%%%%
\vspace{6pt}
Let $(R, \m, k)$ be a Gorenstein local ring that is $k$-algebra and let $V = k[t]_{(t)}$ and $K = k(t)$. 
Note that $R\otimes _k V$ and $R\otimes _k K$ are Gorenstein rings as well. 
Hence, as mentioned above, $\SCM (R\otimes _k V)$ and $\SCM (R\otimes _k K)$ are triangulated categories. 
We denote by $\L : \SCM (R\otimes _k V) \to \SCM (R\otimes _k K)$ (resp. $\R : \SCM (R\otimes _k V) \to \SCMR$) the triangle functor defined by the localization by $t$ (resp. taking $-\otimes _{V} V/tV$).

%%%%%%%%%%%%%%%%%%%%%%%%%%%%%%%%%%%%%%%%%%%%%
%%%%%%%%%%%%%%%%%%%%%%%%%%%%%%%%%%%%%%%%%%%%%
%%%%%%%%%%%%%%%%%%%%%%%%%%%%%%%%%%%%%%%%%%%%%
%%%% Definition of stably degeneration %%%%%%
%%%%%%%%%%%%%%%%%%%%%%%%%%%%%%%%%%%%%%%%%%%%%
%%%%%%%%%%%%%%%%%%%%%%%%%%%%%%%%%%%%%%%%%%%%%
%%%%%%%%%%%%%%%%%%%%%%%%%%%%%%%%%%%%%%%%%%%%%
\begin{definition}\label{stably degeneration}\cite[Definition 4.1]{Y10}
Let $\ulM, \ulN \in \SCMR$.
We say that $\ulM$ stably degenerates to $\ulN$ if there exists a maximal Cohen-Macaulay module ${\underline{Q}} \in \SCM (R\otimes _k V )$ such that $\L({\underline{Q}}) \cong {\underline{M \otimes _k K}}$ in $\SCM (R\otimes _k K)$ and $\R({\underline{Q}}) \cong \ulN$ in $\SCMR$.

We denote by $\ulM \sto \ulN$ if $\ulN$ is obtained from $\ulM$ by iterative stable degenerations, i.e.  there is a sequence of objects  $\ulL_0, \ulL_1, \ldots , \ulL_r$ in  $\SCMR$  such that  $\ulM \cong \ulL_0$, $\ulN \cong \ulL_r$  and  each $\ulL_{i}$  stably degenerates to  $\ulL_{i+1}$  for $0 \leq i < r$. 
\end{definition}

%%%%%%%%%%%%%%%%%%%%%%%%%%%%%%%%%%%%%%%%%%%%%
%%%%%%%%%%%%%%%%%%%%%%%%%%%%%%%%%%%%%%%%%%%%%
%%%%%%%%%%%%%%%%%%%%%%%%%%%%%%%%%%%%%%%%%%%%%
%%%% Remark of stably degenerations %%%%%%%%%
%%%%%%%%%%%%%%%%%%%%%%%%%%%%%%%%%%%%%%%%%%%%%
%%%%%%%%%%%%%%%%%%%%%%%%%%%%%%%%%%%%%%%%%%%%%
%%%%%%%%%%%%%%%%%%%%%%%%%%%%%%%%%%%%%%%%%%%%%
\begin{remark}\label{remark of stably degenerations}
Let $R$ be a Gorenstein local ring that is a $k$-algebra. 

\begin{itemize}
\item[(1)] Let $M, N \in \CMR$. 
If $M$ degenerates to $N$, then $\ulM$ stably degenerates to $\ulN$. 
Therefore that  $M \dego N$  forces that  $\ulM \sto \ulN$. 
(See \cite[Lemma 4.2]{Y10}.)

\item[(2)] Suppose that there is a triangle 
$$
\begin{CD}
 \ulL @>>> \ulM @>>> \ulN @>>> \ulL [1], 
\end{CD}
$$ 
in $\SCMR$. 
Then $\ulM$ stably degenerates to $\ulL \oplus \ulN$, thus $\ulM \sto \ulL \oplus \ulN$. 
(See \cite[Proposition 4.3]{Y10}.) 
\end{itemize}
\end{remark}

The following theorem proved by the second author \cite{Y10} shows the relation between stable degenerations and ordinary degenerations.

%%%%%%%%%%%%%%%%%%%%%%%%%%%%%%%%%%%%%%%%%%%%%
%%%%%%%%%%%%%%%%%%%%%%%%%%%%%%%%%%%%%%%%%%%%%
%%%%%%%%%%%%%%%%%%%%%%%%%%%%%%%%%%%%%%%%%%%%%
%%% Condition for stably degenerations %%%%%%
%%%%%%%%%%%%%%%%%%%%%%%%%%%%%%%%%%%%%%%%%%%%%
%%%%%%%%%%%%%%%%%%%%%%%%%%%%%%%%%%%%%%%%%%%%%
%%%%%%%%%%%%%%%%%%%%%%%%%%%%%%%%%%%%%%%%%%%%%
\begin{theorem}\label{conditions for stable degeneration}\cite[Theorem 5.1, 6.1, 7.1]{Y10}
Let $(R, \m, k)$ be a Gorenstein complete local $k$-algebra, where $k$ is an infinite field. 
Consider the following four conditions for maximal Cohen-Macaulay $R$-modules $M$ and $N$: 

\begin{itemize}
\item[(1)] $R^{m}\oplus M$ degenerates to $R^{n}\oplus N$ for some $m, n \in \N$.
\item[(2)] There is a triangle 
$$
\begin{CD}
\ulZ @>{\tiny 
\begin{pmatrix}
{\underline{\varphi}} \\
{\underline{\psi}}  \\
\end{pmatrix} 
}>> \ulM \oplus \ulZ \ @>>> \ulN @>>> \ulZ [1] \\ 
\end{CD}
$$ 
in $\SCMR$, where ${\underline{\psi}}$ is a nilpotent element of $\SEnd _R (Z)$.
\item[(3)] $\ulM$ stably degenerates to $\ulN$. 
\item[(4)] There exists an $X \in \CMR$ such that $M \oplus R^m \oplus X$ degenerates to $N \oplus R^n \oplus X$ for some $m, n \in \N$. 
\end{itemize}

Then, in general, the implications $(1) \Rightarrow (2)\Rightarrow (3)\Rightarrow (4)$ hold.
If $R$ is an isolated singularity, then $(2)$ and $(3)$ are equivalent. 
Furthermore, if $R$ is an artinian ring, then the conditions $(1), (2)$ and $(3)$ are equivalent. 
\end{theorem}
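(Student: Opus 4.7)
The plan is to prove the chain $(1) \Rightarrow (2) \Rightarrow (3) \Rightarrow (4)$ in general and then separately handle the two refinements under the additional hypotheses. For $(1) \Rightarrow (2)$, I would apply Theorem \ref{Zwara sequence} to the degeneration $R^m \oplus M \dego R^n \oplus N$ coming from (1), obtaining an exact sequence
\[
0 \to Z \xrightarrow{\begin{pmatrix}\varphi\\ \psi\end{pmatrix}} (R^m \oplus M) \oplus Z \to R^n \oplus N \to 0
\]
with $\psi$ nilpotent in $\End_R(Z)$. Passing to $\SCMR$ kills the free summands $R^m$ and $R^n$ and converts this exact sequence into a triangle $\ulZ \to \ulM \oplus \ulZ \to \ulN \to \ulZ[1]$, and nilpotency of $\psi$ in $\End_R(Z)$ automatically descends to nilpotency of $\underline{\psi}$ in $\SEnd_R(Z)$.

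For $(2) \Rightarrow (3)$, I would first invoke the definition of triangles in $\SCMR$ to lift the given triangle to an exact sequence $0 \to Z' \to M' \to N' \to 0$ in $\CMR$ whose terms differ from $Z$, $M \oplus Z$, $N$ only by free summands, and then imitate Zwara's construction by letting $Q$ be the cokernel of the deformed map $\bigl(\varphi,\, \psi - t\cdot\mathrm{id}\bigr)^{T}$ over $R \otimes_k V$. The points to verify are that $Q$ is $V$-flat and maximal Cohen-Macaulay, that $Q/tQ \cong N'$, and that after inverting $t$ the endomorphism $\psi - t\cdot\mathrm{id}$ becomes invertible \emph{modulo projectives}; this last point is the subtlety distinguishing the stable setting from the classical Zwara argument, and it uses exactly the stable nilpotency of $\underline{\psi}$, since $\psi$ itself need only be nilpotent after quotienting by maps factoring through free modules. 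One then reads off $\R({\underline{Q}}) \cong \ulN$ and $\L({\underline{Q}}) \cong \underline{M \otimes_k K}$.

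For $(3) \Rightarrow (4)$, starting from a witness ${\underline{Q}} \in \SCM(R \otimes_k V)$ for the stable degeneration, the stable isomorphisms $\L({\underline{Q}}) \cong \underline{M \otimes_k K}$ and $\R({\underline{Q}}) \cong \ulN$ translate at the module level to honest isomorphisms of the form $Q/tQ \oplus F \cong N \oplus R^n \oplus F$ and $Q[1/t] \oplus F' \cong (M \otimes_k V[1/t]) \oplus (R \otimes_k V[1/t])^m \oplus F'$ for some free modules $F$, $F'$ recording how the stable maps fail to lift to genuine isomorphisms. One then constructs a single $X \in \CMR$ (taken to be a summand of a finitely generated module over $R \otimes_k V$ that simultaneously absorbs the free-summand corrections at $t = 0$ and at the generic fiber) so that $Q \oplus (X \otimes_k V)$ witnesses the ordinary degeneration $M \oplus R^m \oplus X \dego N \oplus R^n \oplus X$.

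The main obstacle will be the refinements. For the isolated-singularity case $(3) \Rightarrow (2)$, the key tool is that $\SHom_R(M, N)$ has finite length, which provides uniform control over the free-summand discrepancies in the previous step and allows one to extract a triangle with middle term $\ulM \oplus \ulZ$ directly from $Q$ without introducing the auxiliary $X$; the construction should proceed by applying the stable analogue of the Zwara construction to a carefully chosen pre-image of $Q$ and using Auslander-Reiten-type finiteness to match degrees. For the artinian case, $\CMR$ coincides with $\modR$ and projective summands are automatically free, so the module $Q$ constructed in $(2) \Rightarrow (3)$ is already a genuine witness for the degeneration of $M \oplus R^m$ to $N \oplus R^n$, yielding $(2) \Rightarrow (1)$ and closing the loop.
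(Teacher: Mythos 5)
The first thing to note is that the paper contains no proof of this statement: it is imported verbatim from \cite[Theorems 5.1, 6.1, 7.1]{Y10} and used as a black box, so there is no internal argument to compare yours against. Judged against the cited reference, your outline of the general chain $(1)\Rightarrow(2)\Rightarrow(3)\Rightarrow(4)$ does follow the right strategy: apply Theorem \ref{Zwara sequence}, pass to the stable category where the free summands vanish, deform $\psi$ to $\psi-t\cdot\mathrm{id}$ and use stable nilpotency of $\underline{\psi}$ to identify the generic fibre, and absorb the free-summand discrepancies between stable and honest isomorphisms into an auxiliary module $X$.

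As a proof, however, there are genuine gaps. In $(1)\Rightarrow(2)$ you pass the Zwara sequence directly into $\SCMR$, but Theorem \ref{Zwara sequence} only produces a finitely generated module $Z$, not a maximal Cohen--Macaulay one, so the sequence does not a priori live in $\CMR$ and does not a priori yield a triangle in $\SCMR$; one must first show $Z$ is MCM (this can be done by a local cohomology argument on the sequence, using that $H^i_{\m}(\psi)$ is simultaneously nilpotent and bijective for $i<\dim R$, forcing $H^i_{\m}(Z)=0$), and this step is absent. More seriously, the two refinements --- which are the substantive content of the theorem --- are asserted rather than proved. For the artinian case, the claim that ``the module $Q$ constructed in $(2)\Rightarrow(3)$ is already a genuine witness'' skips exactly the difficulty: every identification in the stable construction holds only up to free direct summands and modulo maps factoring through projectives, and nilpotency of $\underline{\psi}$ in $\SEnd_R(Z)$ does not give nilpotency of any lift $\psi$ in $\End_R(Z)$, which is what an honest application of Theorem \ref{Zwara sequence} in the reverse direction requires. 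Finally, for the isolated-singularity implication $(3)\Rightarrow(2)$ --- the hardest part of \cite{Y10} --- invoking finiteness of $\SHom_R(M,N)$ and ``Auslander--Reiten-type finiteness to match degrees'' is a placeholder, not an argument: nothing is said about how a triangle with middle term $\ulM\oplus\ulZ$ and stably nilpotent $\underline{\psi}$ is actually extracted from the $V$-flat witness $\underline{Q}$.
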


As one of the direct consequences of Theorem \ref{conditions for stable degeneration}, we have the following corollary.

%%%%%%%%%%%%%%%%%%%%%%%%%%%%%%%%%%%%%%%%%%%%%%%%%%%%
%%%%% category equivalence %%%%%%%%%%%%%%%%%%%%%%%%%
%%%%%%%%%%%%%%%%%%%%%%%%%%%%%%%%%%%%%%%%%%%%%%%%%%%%
\begin{corollary}\label{cor isolated singularity}\cite[Corollary 6.6]{Y10}
Let  $(R_1, \m_1, k)$ and  $(R_2, \m_2, k)$ be Gorenstein complete local $k$-algebras.
Assume that the both $R_1$ and  $R_2$  are isolated singularities, and that $k$  is an infinite field. 
Suppose there is a $k$-linear equivalence  $F : \SCM (R_1) \to \SCM (R_2)$  of triangulated categories. 
Then, for $\ulM,\  \ulN \in \SCM (R_1)$, 
$\ulM$  stably degenerates to $\ulN$ if and only if  
$F(\ulM)$  stably degenerates to $F(\ulN)$. 
\end{corollary}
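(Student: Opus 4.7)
The plan is to reduce the statement to the triangle-theoretic characterization of stable degeneration given by the equivalence $(2) \Leftrightarrow (3)$ in Theorem \ref{conditions for stable degeneration}, which is valid precisely because $R_1$ (and $R_2$) is an isolated singularity. Since condition $(2)$ is phrased entirely in the internal language of the triangulated category $\SCMR$ (a triangle, plus nilpotency of an endomorphism), and $F$ is a $k$-linear triangle equivalence, this condition will transport verbatim along $F$.

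More concretely, I would proceed as follows. Assume first that $\ulM$ stably degenerates to $\ulN$. By Theorem \ref{conditions for stable degeneration} applied to $R_1$ (using that $R_1$ is an isolated singularity, so $(3) \Rightarrow (2)$), there exist $Z \in \CM(R_1)$ and morphisms $\underline{\varphi} \colon \ulZ \to \ulM$, $\underline{\psi} \colon \ulZ \to \ulZ$ fitting into a triangle
$$
\ulZ \xrightarrow{\ \begin{pmatrix}\underline{\varphi}\\ \underline{\psi}\end{pmatrix}\ } \ulM \oplus \ulZ \longrightarrow \ulN \longrightarrow \ulZ[1]
$$
in $\SCM(R_1)$, with $\underline{\psi}$ nilpotent in $\SEnd_{R_1}(Z)$. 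Applying the triangle functor $F$ and using its additivity, I obtain a triangle
$$
F(\ulZ) \xrightarrow{\ \begin{pmatrix}F(\underline{\varphi})\\ F(\underline{\psi})\end{pmatrix}\ } F(\ulM) \oplus F(\ulZ) \longrightarrow F(\ulN) \longrightarrow F(\ulZ)[1]
$$
in $\SCM(R_2)$. Since $F$ is an equivalence, the induced map $\SEnd_{R_1}(Z) \to \SEnd_{R_2}(F(Z))$ is a ring isomorphism, so $F(\underline{\psi})$ is again nilpotent. This is exactly condition $(2)$ for the pair $(F(\ulM), F(\ulN))$ over $R_2$, and since $R_2$ is also an isolated singularity, Theorem \ref{conditions for stable degeneration} (direction $(2) \Rightarrow (3)$) yields that $F(\ulM)$ stably degenerates to $F(\ulN)$.

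The converse is identical: since $F$ is an equivalence, there is a quasi-inverse $F^{-1} \colon \SCM(R_2) \to \SCM(R_1)$ which is itself a $k$-linear triangle equivalence, so the same argument with $F$ replaced by $F^{-1}$ gives the reverse implication.

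I do not anticipate a genuine obstacle here; the entire content is the observation that condition $(2)$ of Theorem \ref{conditions for stable degeneration} is intrinsic to the triangulated category together with its $k$-linear structure (triangles are preserved by $F$, and nilpotency of an endomorphism is preserved by any ring isomorphism on $\SEnd$). The only subtlety worth checking carefully is that $F$ really does induce a ring isomorphism on stable endomorphism rings and that the matrix form of the connecting morphism is respected under $F$; both are automatic from $F$ being an additive equivalence.
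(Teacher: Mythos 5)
Your proposal is correct and is exactly the argument the paper has in mind: the corollary is presented as a direct consequence of Theorem \ref{conditions for stable degeneration}, using that for isolated singularities stable degeneration is equivalent to the intrinsic triangle condition $(2)$, which transports along any $k$-linear triangle equivalence. Your attention to the preservation of nilpotency under the induced isomorphism of stable endomorphism rings and to the quasi-inverse for the converse direction matches the proof of \cite[Corollary 6.6]{Y10}.
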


We need several lemmas to prove Theorem \ref{type A}. 

%%%%%%%%%%%%%%%%%%%%%%%%%%%%%%%%%%%%%%%%%%%%%
%%%%%%%%%%%%%%%%%%%%%%%%%%%%%%%%%%%%%%%%%%%%%
%%%%%%%%%%%%%%%%%%%%%%%%%%%%%%%%%%%%%%%%%%%%%
%%%%% Lemma A %%%%%%%%%%%%%%%%%%%%%%%%%%%%%%%
%%%%%%%%%%%%%%%%%%%%%%%%%%%%%%%%%%%%%%%%%%%%%
%%%%%%%%%%%%%%%%%%%%%%%%%%%%%%%%%%%%%%%%%%%%%
\begin{lemma}\label{lemma A} 
Let $R$ be a Gorenstein complete local ring. 
If there is a triangle $\ulL \to \ulM \to \ulN \to \ulL[1]$ in $\SCMR$, then there exist non-negative integers $m$ and $n$ such that  $M \oplus R^m \exto L \oplus N \oplus R^n$. 
In this case, we have $[M \oplus R^m] = [L \oplus N \oplus R^n]$ in $K_0 (\modR)$. 
\end{lemma}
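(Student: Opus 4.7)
The plan is to lift the triangle in $\SCMR$ to an honest short exact sequence in $\CMR$, invoke Remark \ref{remark of extension} to produce a degeneration by extension, and then absorb the free summands forced by the stable isomorphisms into the factors $R^m$ and $R^n$.

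By the description of the triangulated structure on $\SCMR$ recalled earlier in the section, the given triangle arises from some short exact sequence
$$0 \to L' \to M' \to N' \to 0$$
in $\CMR$, together with stable isomorphisms $\ulL' \cong \ulL$, $\ulM' \cong \ulM$, $\ulN' \cong \ulN$ in $\SCMR$. Because $R$ is a complete local Gorenstein ring, $\CMR$ is Krull--Schmidt, so each such stable isomorphism $\ulX' \cong \ulX$ is equivalent to an honest isomorphism $X' \oplus R^{a_X} \cong X \oplus R^{b_X}$ in $\CMR$ for suitable non-negative integers $a_X, b_X$. Applying Remark \ref{remark of extension} to the above short exact sequence yields $M' \exto L' \oplus N'$.

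Next I would note that extension degenerations are stable under direct sums: if $0 \to U \to X \to V \to 0$ witnesses $X \exto U \oplus V$, then for any MCM module $Z$ the sequence $0 \to U \to X \oplus Z \to V \oplus Z \to 0$ witnesses $X \oplus Z \exto U \oplus V \oplus Z$. Adding the free module $R^{a_L + a_N + b_M}$ to $M' \exto L' \oplus N'$ and substituting via the three isomorphism lifts gives
$$M \oplus R^m \;\exto\; L \oplus N \oplus R^n$$
with $m = b_M + a_L + a_N$ and $n = a_M + b_L + b_N$ (or some minor variant depending on bookkeeping).

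For the Grothendieck-group identity, the cleanest route is to observe that every degeneration by extension preserves the class in $K_0(\modR)$, since the witnessing short exact sequence $0 \to U \to X \to V \to 0$ forces $[X] = [U] + [V] = [U \oplus V]$; iterating, $M \oplus R^m \exto L \oplus N \oplus R^n$ immediately yields $[M \oplus R^m] = [L \oplus N \oplus R^n]$. Alternatively one can verify it by hand from $[M'] = [L'] + [N']$ combined with the three relations $[X'] + a_X[R] = [X] + b_X[R]$. I do not anticipate any serious obstacle; the only real task is careful bookkeeping of free summands when passing between $\SCMR$ and $\CMR$.
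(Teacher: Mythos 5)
Your proposal is correct and follows essentially the same route as the paper's own proof: realize the triangle by a short exact sequence $0 \to L' \to M' \to N' \to 0$ in $\CMR$, get $M' \exto L' \oplus N'$, and use the Krull--Schmidt property to replace $L'$, $M'$, $N'$ by $L$, $M$, $N$ up to free summands, which are then absorbed into $R^m$ and $R^n$. The paper leaves the $K_0$ identity implicit, and your justification of it (each witnessing short exact sequence gives $[X]=[U]+[V]$) is fine.
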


\begin{proof}
If there is a triangle $\ulL \to \ulM \to \ulN \to \ulL[1]$, then by definition there is a short exact sequence $0 \to L' \to M' \to N' \to 0$ where  $\ulL ' \cong \ulL$,  $\ulM ' \cong \ulM$ and $\ulN ' \cong \ulN$ in $\SCMR$. 
Thus  $M' \exto L' \oplus N'$. 
Since  $R$  is a complete local ring, we note that  $L'$ (resp. $M'$, $N'$)  is isomorphic to $L$ (resp. $M$, $N$) up to free summands, i.e. 
$L' \oplus R^{a'} \cong L \oplus R^{a}$, 
$M' \oplus R^{b'} \cong M \oplus R^{b}$, 
$N' \oplus R^{c'} \cong N \oplus R^{c}$ for integers $a$, $a'$, $b$, $b'$, $c$  and $c'$. 
Therefore we have 
$$
M \oplus R^{a'+b+c'} \cong 
M' \oplus R^{a'+b'+c'} \exto 
L' \oplus N' \oplus R^{a'+b'+c'} 
\cong L \oplus N \oplus R^{a+b'+c}. 
$$
\end{proof}

%%%%%%%%%%%%%%%%%%%%%%%%%%%%%%%%%%%%%%%%%%%%%
%%%%%%%%%%%%%%%%%%%%%%%%%%%%%%%%%%%%%%%%%%%%%
%%%%%%%%%%%%%%%%%%%%%%%%%%%%%%%%%%%%%%%%%%%%%
%%%%% Lemma B %%%%%%%%%%%%%%%%%%%%%%%%%%%%%%%
%%%%%%%%%%%%%%%%%%%%%%%%%%%%%%%%%%%%%%%%%%%%%
%%%%%%%%%%%%%%%%%%%%%%%%%%%%%%%%%%%%%%%%%%%%%
The next lemma will follow easily from the fact that the free $R$-module  $R$  is relatively injective in $\CMR$ if $R$ is Gorenstein. 
We leave its proof to the reader.

\begin{lemma}\label{lemma B}
Let $R$ be a Gorenstein local ring and let $0 \to L \to M \to N \to 0$ be an exact sequence in $\CMR$. 
Suppose that $L$ (resp. $N$) contains a free $R$-module $R^n$ as a direct summand. 
Then there is an exact sequence in $\CMR$  of the form 
$0 \to L' \to M' \to N \to 0$ (resp. $0 \to L \to M' \to N' \to 0$), 
where  $L' \oplus R^n  \cong  L$ (resp. $N' \oplus R^n \cong N$) and $M' \oplus R^n  \cong M$. 
 \end{lemma}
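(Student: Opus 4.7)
The plan is to prove both halves of the lemma symmetrically, via a pushout/pullback construction in $\CMR$ followed by a splitting argument that uses the Gorenstein hypothesis in the form $\Ext^1_R(X, R) = 0$ for every $X \in \CMR$ (which makes free $R$-modules relatively injective within $\CMR$).

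For the first assertion, assume $L \cong L' \oplus R^n$ and let $p \colon L \twoheadrightarrow L'$ be the projection killing the summand $R^n$. I would form the pushout of the inclusion $L \hookrightarrow M$ along $p$, producing a commutative diagram with exact rows
$$
\begin{CD}
0 @>>> L @>>> M @>>> N @>>> 0 \\
@. @VVV @VVV @| @. \\
0 @>>> L' @>>> M'' @>>> N @>>> 0,
\end{CD}
$$
together with a short exact sequence $0 \to R^n \to M \to M'' \to 0$ obtained by reading off the kernel and cokernel of the middle vertical arrow (using that the left square is a pushout with kernel of $p$ equal to $R^n$). Since $L'$ and $N$ lie in $\CMR$, so does $M''$. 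Consequently $\Ext^1_R(M'', R^n) = 0$, so this sequence splits and $M \cong M'' \oplus R^n$. Setting $M' := M''$ produces the required exact sequence $0 \to L' \to M' \to N \to 0$ in $\CMR$, with $L' \oplus R^n \cong L$ and $M' \oplus R^n \cong M$.

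The second assertion is proved by the dual construction. Assume $N \cong N' \oplus R^n$ and form the pullback of the surjection $M \twoheadrightarrow N$ along the inclusion $N' \hookrightarrow N$. This yields an exact sequence $0 \to L \to M'' \to N' \to 0$ in $\CMR$, together with an inclusion $M'' \hookrightarrow M$ whose cokernel is $N/N' \cong R^n$. The resulting short exact sequence $0 \to M'' \to M \to R^n \to 0$ splits because $R^n$ is free (here only projectivity of $R^n$ is needed, and the Gorenstein hypothesis plays no role), giving $M \cong M'' \oplus R^n$. Setting $M' := M''$ then finishes the argument.

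No serious obstacle arises; the one place where the Gorenstein hypothesis is used essentially is the splitting in the first half, and it boils down to the standard vanishing $\Ext^1_R(M'', R) = 0$ for the maximal Cohen-Macaulay module $M''$.
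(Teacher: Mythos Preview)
Your argument is correct and is exactly the approach the paper has in mind: the paper does not actually prove this lemma but merely states that it ``will follow easily from the fact that the free $R$-module $R$ is relatively injective in $\CMR$ if $R$ is Gorenstein'' and leaves the proof to the reader. Your pushout/pullback construction together with the vanishing $\Ext^1_R(M'',R)=0$ for $M''\in\CMR$ is precisely the intended reader's exercise.
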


As a result of this lemma we obtain the following. 

\begin{corollary}\label{cor B}
Let $R$ be a Gorenstein local $k$-algebra and let $M, N \in \CMR$. 
If $M \exto N \oplus R^n$ for an integer $n$, then there is an $R$-module $M' \in \CMR$ such that $M \cong M' \oplus R^n$ and $M' \exto N$.     
\end{corollary}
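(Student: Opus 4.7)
The plan is to induct on the length $r$ of a chain $M = L_0, L_1, \ldots, L_r = N \oplus R^n$ of extension-degenerations witnessing $M \exto N \oplus R^n$, peeling off a single copy of $R^n$ from the terminal end at each stage. The base case $r=0$ is trivial with $M' := N$. The inductive step reduces to what I will call a \emph{one-step claim}: if $L \in \CMR$ degenerates by an extension to $\widetilde L \oplus R^n$ with $\widetilde L \in \CMR$, then $L \cong L^\ast \oplus R^n$ for some $L^\ast \in \CMR$ satisfying $L^\ast \exto \widetilde L$. Granted this claim, applying it to the final step of the chain produces $L_{r-1} \cong L_{r-1}^\ast \oplus R^n$ with $L_{r-1}^\ast \exto N$; the truncated chain $L_0, \ldots, L_{r-1}$ then has length $r-1$ and witnesses $M \exto L_{r-1}^\ast \oplus R^n$, so the inductive hypothesis applied with $L_{r-1}^\ast$ in place of $N$ produces $M' \in \CMR$ with $M \cong M' \oplus R^n$ and $M' \exto L_{r-1}^\ast$. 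Since $\exto$ is by construction closed under concatenation of chains, $M' \exto L_{r-1}^\ast \exto N$ gives $M' \exto N$.

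To establish the one-step claim, I would unpack the defining short exact sequence $0 \to U \to L \to V \to 0$ with $U \oplus V \cong \widetilde L \oplus R^n$ and exploit the Krull-Schmidt property of $\modR$ guaranteed by the completeness of $R$. Writing $U \cong U_0 \oplus R^a$, $V \cong V_0 \oplus R^b$, and $\widetilde L \cong \widetilde L_0 \oplus R^w$ with $U_0, V_0, \widetilde L_0$ admitting no free summand, the comparison of Krull-Schmidt decompositions on the two sides of $U_0 \oplus V_0 \oplus R^{a+b} \cong \widetilde L_0 \oplus R^{w+n}$ forces $U_0 \oplus V_0 \cong \widetilde L_0$ and $a + b = w + n$. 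Two applications of Lemma \ref{lemma B} then come in, one stripping $R^b$ off the quotient $V$ and one stripping $R^a$ off the submodule $U$, producing a short exact sequence $0 \to U_0 \to L'' \to V_0 \to 0$ with $L \cong L'' \oplus R^{a+b}$. Setting $L^\ast := L'' \oplus R^w$, one has $L \cong L^\ast \oplus R^n$; the extracted sequence witnesses $L'' \exto U_0 \oplus V_0 = \widetilde L_0$, and direct-summing with the identity on $R^w$ gives $L^\ast \exto \widetilde L_0 \oplus R^w = \widetilde L$.

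The main obstacle, in my view, lies not in any deep structural argument but in the Krull-Schmidt bookkeeping of the one-step reduction: one must not assume that $\widetilde L$ itself is free-summand-free, and must allow the free summand $R^n$ of the target to distribute in an a priori arbitrary way between $U$ and $V$. Once maximal free decompositions are introduced as above, the matching of indecomposable summands is forced by Krull-Schmidt, and the two invocations of Lemma \ref{lemma B} dispatch the remainder.
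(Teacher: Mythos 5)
Your argument is correct and is exactly the proof the paper intends (the paper derives the corollary from Lemma \ref{lemma B} without writing out the details): induct on the length of the chain, use the Krull--Schmidt property to see how the free summand distributes between the sub and the quotient in each defining short exact sequence, and apply Lemma \ref{lemma B} twice to strip it off. The only point left tacit is that the intermediate modules $L_i$ of the chain automatically lie in $\CMR$ (each $U$ and $V$ is a direct summand of the maximal Cohen--Macaulay module $L_{i+1}$, hence maximal Cohen--Macaulay, and then so is the extension $L_i$), which is needed before Lemma \ref{lemma B} can be invoked; this follows immediately by downward induction along the chain.
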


We now consider the stable analogue of the degeneration by an extension. 

%%%%%%%%%%%%%%%%%%%%%%%%%%%%%%%%%%%%%%%%%%%%%
%%%%%%%%%%%%%%%%%%%%%%%%%%%%%%%%%%%%%%%%%%%%%
%%%%%%%%%%%%%%%%%%%%%%%%%%%%%%%%%%%%%%%%%%%%%
%%%%% Definition of triangle order %%%%%%%%%%
%%%%%%%%%%%%%%%%%%%%%%%%%%%%%%%%%%%%%%%%%%%%%
%%%%%%%%%%%%%%%%%%%%%%%%%%%%%%%%%%%%%%%%%%%%%
\begin{definition}\label{triangle order}
We say that $\ulM$ stably degenerates  by a triangle to $\ulN$, if there is a triangle of the form ${\underline{U}} \to \ulM \to {\underline{V}} \to {\underline{U}}[1]$ in $\SCMR$ such that ${\underline{U}}\oplus {\underline{V}} \cong \ulN$. 
We denote by $\ulM \trio \ulN$  if there is a finite sequence of modules 
${\underline{L_0}}, {\underline{L_1}}, \cdots , {\underline{L_r}}$  in $\SCMR$ such that 
 $\ulM \cong {\underline{L_0}}$, $\ulN \cong {\underline{L_{r}}}$  and  
each ${\underline{L_{i}}}$  stably degenerates by a triangle to ${\underline{L_{i+1}}}$ for $0 \leq i < r$. 
\end{definition}

It is obvious from Remark \ref{remark of stably degenerations}(2) that  $\ulM \trio \ulN$  implies  that $\ulM \sto \ulN$.

%%%%%%%%%%%%%%%%%%%%%%%%%%%%%%%%%%%%%%%%%%%%%
%%%%%%%%%%%%%%%%%%%%%%%%%%%%%%%%%%%%%%%%%%%%%
%%%%%%%%%%%%%%%%%%%%%%%%%%%%%%%%%%%%%%%%%%%%%
%%%%% Proposition of triangle order %%%%%%%%%
%%%%%%%%%%%%%%%%%%%%%%%%%%%%%%%%%%%%%%%%%%%%%
%%%%%%%%%%%%%%%%%%%%%%%%%%%%%%%%%%%%%%%%%%%%%
\begin{proposition}\label{prop of tri.}
Let $(R, \m, k)$ be a Gorenstein complete local ring and let $M , N \in \CMR$. 
Assume $[M] = [N]$ in $K_{0} (\modR)$. 
Then  $\ulM \trio \ulN$  if and only if $M \exto N$.  
\end{proposition}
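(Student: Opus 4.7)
The plan is to tackle the two directions separately; only the ``only if'' direction requires the $K_0$-hypothesis. For the ``if'' direction, I would take a chain $M = L_0, L_1, \ldots, L_r = N$ in $\CMR$ realizing $M \exto N$, each step a short exact sequence $0 \to U_i \to L_i \to V_i \to 0$ with $L_{i+1} \cong U_i \oplus V_i$. Since $L_{i+1}$ is MCM, its direct summands $U_i$ and $V_i$ are too, so the sequence lies entirely in $\CMR$ and induces a triangle $\underline{U_i} \to \ulL_i \to \underline{V_i} \to \underline{U_i}[1]$ in $\SCMR$ exhibiting $\ulL_i$ as stably degenerating by a triangle to $\underline{U_i} \oplus \underline{V_i} \cong \ulL_{i+1}$. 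Concatenating then gives $\ulM \trio \ulN$.

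For the harder ``only if'' direction, I would fix a chain $\ulM = \ulL_0, \ulL_1, \ldots, \ulL_r = \ulN$ of stable triangle-degenerations, each coming from a triangle $\underline{U_i} \to \ulL_i \to \underline{V_i} \to \underline{U_i}[1]$ with $\underline{U_i} \oplus \underline{V_i} \cong \ulL_{i+1}$. Lemma~\ref{lemma A} converts each such triangle into an actual extension-degeneration $L_i \oplus R^{m_i} \exto U_i \oplus V_i \oplus R^{n_i}$ in $\CMR$, and the stable isomorphism $\underline{U_i} \oplus \underline{V_i} \cong \ulL_{i+1}$ lifts, by the Krull-Schmidt theorem in the complete local setting, to an actual isomorphism $U_i \oplus V_i \oplus R^{a_i} \cong L_{i+1} \oplus R^{b_i}$ in $\CMR$. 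Combining these yields $L_i \oplus R^{c_i} \exto L_{i+1} \oplus R^{d_i}$ for some $c_i, d_i \geq 0$. Using that $\exto$ is preserved under taking direct sums with any fixed module, I would then chain these relations---after matching up the intervening free summands at each stage---into a single $M \oplus R^s \exto N \oplus R^t$ for suitable $s, t \geq 0$.

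The final task is to cancel the excess free summands. Remark~\ref{remark of degenerations}(1) applied to $M \oplus R^s \exto N \oplus R^t$ gives $[M \oplus R^s] = [N \oplus R^t]$ in $K_0(\modR)$, and combined with the hypothesis $[M] = [N]$ this forces $(s - t)[R] = 0$. Localizing at any minimal prime $\p$ of $R$ defines a homomorphism $K_0(\modR) \to \Z$, $[X] \mapsto \ell_{R_\p}(X_\p)$, which sends $[R]$ to $\ell_{R_\p}(R_\p) \geq 1$; hence $[R]$ is non-torsion in $K_0(\modR)$ and $s = t$. At this point Corollary~\ref{cor B} applies to $M \oplus R^s \exto N \oplus R^s$, producing an $M' \in \CMR$ with $M \oplus R^s \cong M' \oplus R^s$ and $M' \exto N$. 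Krull-Schmidt then gives $M \cong M'$, and therefore $M \exto N$.

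The main obstacle I anticipate is the bookkeeping of the free summands that Lemma~\ref{lemma A} attaches to each triangle: the passage from $\SCMR$ to $\CMR$ only yields an exact sequence after stabilization by a free module, so chaining many such steps produces $M \oplus R^s \exto N \oplus R^t$ with $s$ and $t$ a priori distinct. The hypothesis $[M] = [N]$ in $K_0(\modR)$ is precisely the ingredient needed to force $s = t$ so that the final cancellation via Corollary~\ref{cor B} and Krull-Schmidt can actually be carried out.
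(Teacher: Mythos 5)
Your proposal is correct and follows essentially the same route as the paper's proof: the ``if'' direction via the triangle induced by a short exact sequence in $\CMR$, and the ``only if'' direction via Lemma \ref{lemma A}, chaining the resulting extension-degenerations up to free summands, using $[M]=[N]$ in $K_0(\modR)$ to force the two free ranks to agree, and then cancelling via Corollary \ref{cor B} and Krull--Schmidt. Your explicit justification that $[R]$ is non-torsion in $K_0(\modR)$ (by localizing at a minimal prime) fills in a step the paper leaves implicit, but the argument is otherwise identical.
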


\begin{proof}
The implication $M \exto N \Rightarrow \ulM \trio \ulN$ is clear, 
since, if there is an exact sequence $0 \to U \to L \to V \to 0$ in $\CMR$, then there is a triangle ${\underline{U}} \to {\underline{L}} \to {\underline{V}} \to {\underline{U}} [1]$ in $\SCMR$. 

To prove the other implication, 
assume  $\ulM \trio \ulN$.
Then there are ${\underline{L_0}}, {\underline{L_1}}, \cdots , {\underline{L_r}}$  in $\SCMR$ such that 
 $\ulM \cong {\underline{L_0}}$, $\ulN \cong {\underline{L_{r}}}$  and  
each ${\underline{L_{i}}}$  stably degenerates by a triangle to ${\underline{L_{i+1}}}$ for $0 \leq i < r$. 
It follows from Lemma \ref{lemma A} that 
$L_{i} \oplus R^{a_i} \exto L_{i+1} \oplus R^{b_i}$  for some integers $a_i$  and $b_i$. 
Thus we have 
$$
L_0 \oplus R^{a_0 + a_1 + \ldots + a_{r-1}} 
\exto 
L_{1}\oplus R^{b_0 + a_1 + \ldots + a_{r-1}} \exto \cdots \exto 
L_{r}\oplus R^{b_0 + b_1 + \ldots + b_{r-1}} 
$$
Since $M$ (resp. $N$) is isomorphic to $L_0$  (resp. $L_r$) up to free summands, we conclude from the above that   $M \oplus R^{m} \exto N \oplus R^n$  for some integers  $m$  and $n$. 

Since  there is a degeneration, we have 
 $[M ] + [R^{m}] = [N] + [R^n]$  in  $K_0(\modR )$, which forces  $[R^m] = [R^n]$  by the assumption.
Thus it follows  $m = n$. 
Therefore, by Corollary \ref{cor B} and by the Krull-Schmidt property, we have  $M \exto N$ as desired. 
\end{proof}

As a corollary of the proof of Proposition \ref{prop of tri.},  we have the following.

\begin{corollary}
Let $(R, \m, k)$ be a Gorenstein complete local ring. 
Then the relation  $\trio$  gives a well-defined partial order on the set of isomorphism classes of objects in  $\SCMR$. 
\end{corollary}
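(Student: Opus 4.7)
The plan is to verify the four axioms of a partial order on isomorphism classes in $\SCMR$: well-definedness, reflexivity, transitivity, and antisymmetry. Reflexivity is immediate from Definition \ref{triangle order} (take a length-zero chain), transitivity follows by concatenation of chains, and well-definedness on isomorphism classes follows because a triangle in $\SCMR$ depends only on its terms up to isomorphism in $\SCMR$. Thus only antisymmetry requires genuine work.

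For antisymmetry, suppose $\ulM \trio \ulN$ and $\ulN \trio \ulM$. The strategy is to reduce to antisymmetry of the partial order $\exto$. Applying Lemma \ref{lemma A} step by step along the two chains of triangular degenerations, exactly as in the proof of Proposition \ref{prop of tri.}, one produces non-negative integers $m, n, m', n'$ with $M \oplus R^m \exto N \oplus R^n$ and $N \oplus R^{m'} \exto M \oplus R^{n'}$. Enlarging each of these relations by an appropriate free module (using that $X \exto Y$ implies $X \oplus R^a \exto Y \oplus R^a$, obtained by summing each short exact sequence in the chain with the split sequence $0 \to 0 \to R^a \to R^a \to 0$) and then concatenating, one arrives at
$$
M \oplus R^{m+m'} \exto N \oplus R^{n+m'} \exto M \oplus R^{n+n'}.
$$

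The next step is to argue that the two outer terms in this chain are actually isomorphic as $R$-modules, which will let antisymmetry of $\exto$ force all three terms to be isomorphic, yielding $\ulM \cong \ulN$. Since every $\exto$-step arises from a short exact sequence, the class in the Grothendieck group $K_0(\modR)$ is preserved, so $(m+m')[R] = (n+n')[R]$ in $K_0(\modR)$. For a Noetherian local ring $[R]$ is not torsion in $K_0(\modR)$ (length at a minimal prime, for instance, defines a map to $\mathbb{Z}$ sending $[R]$ to a positive integer), hence $m+m' = n+n'$, so $M \oplus R^{m+m'} \cong M \oplus R^{n+n'}$. Antisymmetry of $\exto$ on isomorphism classes of finitely generated $R$-modules (recalled after Definition \ref{ext order}) then forces $N \oplus R^{n+m'} \cong M \oplus R^{m+m'}$, which gives $\ulM \cong \ulN$ in $\SCMR$.

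The main obstacle will be the free-summand bookkeeping in the antisymmetry step: after translating both $\trio$-relations into $\exto$-relations up to free summands via Lemma \ref{lemma A}, one must invoke both the $K_0$-invariance of $\exto$ and the non-torsion property of $[R]$ in $K_0(\modR)$ in order to close the resulting $\exto$-chain into a cycle to which antisymmetry of $\exto$ applies.
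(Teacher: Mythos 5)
Your proposal is correct and follows essentially the same route as the paper: both reduce antisymmetry of $\trio$ to antisymmetry of $\exto$ by converting the two chains of triangle degenerations into $\exto$-relations up to free summands via Lemma \ref{lemma A} (as in the proof of Proposition \ref{prop of tri.}), then use $K_0(\modR)$-invariance to match the free ranks. Your extra remarks (checking reflexivity/transitivity, and justifying that $[R]$ is non-torsion in $K_0(\modR)$ via length at a minimal prime) only make explicit what the paper leaves implicit.
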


\begin{proof}
We have to show that $\ulM \trio \ulN$ and $\ulN \trio \ulM$ implies that $\ulM \cong \ulN$  for $\ulM, \ulN \in  \SCMR$. 
If  $\ulM \trio \ulN$, then it follows from the proof of Proposition \ref{prop of tri.} that  $M \oplus R^m \exto N \oplus R^n$  for some integers $m$, $n$. 
Likewise if  $\ulN \trio \ulM$ then $N \oplus R^{n'} \exto M \oplus R^{m'}$   for some integers $m'$, $n'$. 
Combining them, we have  $M \oplus R^{m+n'} \exto N \oplus R^{n+n'} \exto M \oplus R^{n+m'}$.   
Since there is a degeneration, all of these modules give the same class in $K_0(\modR)$, and as in the same argument in the proof of Proposition \ref{prop of tri.} we see that  $m+n' = n +m'$. 
Recall that  $\exto$  is a partial order on the set of isomorphism classes of objects in $\CMR$. 
(See also the comments after Definition \ref{ext order}.) 
Thus it is concluded that  $M \oplus R^{m+n'} \cong N \oplus R^{n+n'}$ in $\CMR$, and hence  $\ulM \cong \ulN$  in $\SCMR$.
\end{proof}

The following lemma is known as the Kn\"orrer's periodicity (cf. \cite[Theorem 12.10]{Y}).
%%%%%%%%%%%%%%%%%%%%%%%%%%%%%%%%%%%%%%%%%%%%%
%%%%%%%%%%%%%%%%%%%%%%%%%%%%%%%%%%%%%%%%%%%%%
%%%%%%%%%%%%%%%%%%%%%%%%%%%%%%%%%%%%%%%%%%%%%
%%% Kn\"orrer's periodicity %%%
%%%%%%%%%%%%%%%%%%%%%%%%%%%%%%%%%%%%%%%%%%%%%
%%%%%%%%%%%%%%%%%%%%%%%%%%%%%%%%%%%%%%%%%%%%%
\begin{lemma}\label{periodicity}
Let  $k$  be an algebraically closed field of characteristic $0$ and 
let $S = k[[x_0, x_1, \cdots , x_n]]$ be a formal power series ring. 
For a non-zero element  $f \in (x_0, x_1, \cdots , x_n)S$, we consider 
the two rings $R = S/(f)$  and $R^{\sharp} = S[[y, z]] /(f+y^2+z^2)$.  
Then the stable categories  $\SCM (R)$ and $\SCM (R^{\sharp})$ are equivalent as triangulated categories. 
\end{lemma}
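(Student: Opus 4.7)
The plan is to invoke Eisenbud's matrix factorization theorem, which yields a triangle equivalence between $\SCM(S/(f))$ and the homotopy category of matrix factorizations $\mathrm{MF}(S,f)$ (pairs $(\varphi,\psi)$ of square matrices over $S$ with $\varphi\psi=\psi\varphi = f\cdot I_n$, equipped with the standard notions of morphism and null-homotopy, and shift functor $(\varphi,\psi)[1]=(\psi,\varphi)$). It therefore suffices to produce a triangle equivalence
$$\mathrm{MF}(S,f) \;\simeq\; \mathrm{MF}(S[[y,z]],\; f+y^2+z^2).$$
Since $k$ is algebraically closed of characteristic zero, one may fix $i\in k$ with $i^2=-1$ and change variables by $u=y+iz,\ v=y-iz$; this gives $y^2+z^2=uv$ and $S[[y,z]]=S[[u,v]]$, so the problem reduces to the case of the Kn\"orrer hypersurface $f+uv$.

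Next I would introduce the Kn\"orrer functor
$$\mathcal{K} : \mathrm{MF}(S,f) \longrightarrow \mathrm{MF}(S[[u,v]],\, f+uv),\qquad \mathcal{K}(\varphi,\psi) = \left( \begin{pmatrix} \varphi & uI \\ -vI & \psi \end{pmatrix},\; \begin{pmatrix} \psi & -uI \\ vI & \varphi \end{pmatrix} \right).$$
A direct block computation, using that the scalars $u,v$ commute with every matrix entry, confirms that both products equal $(f+uv)\cdot I_{2n}$, so the result is a matrix factorization of $f+uv$. Extending $\mathcal{K}$ to morphisms blockwise is routine, and a short verification shows it preserves null-homotopies as well as the standard mapping cones, so it descends to a triangle functor between the homotopy categories.

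For a quasi-inverse $\mathcal{K}'$, given $(\Phi,\Psi)\in \mathrm{MF}(S[[u,v]], f+uv)$ one forms the associated maximal Cohen-Macaulay $R^{\sharp}$-module $\mathrm{coker}\,\Phi$, reduces it modulo the regular parameter $u$, and recovers a matrix factorization of $f$ over $S$ from the induced two-periodic free resolution over $R = S/(f)$. The principal obstacle is to verify that $\mathcal{K}'\circ\mathcal{K}$ and $\mathcal{K}\circ\mathcal{K}'$ are isomorphic to the identity in the homotopy categories: the composites are not equal to the identity on the nose, and one must exhibit explicit invertible block matrices (over $S$ and over $S[[u,v]]$ respectively) that conjugate each composite into the direct sum of the original factorization with a contractible matrix factorization such as $(1,f)$ or $(u,v)$, so that the contractible summand is annihilated in the homotopy category. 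Once these natural isomorphisms are in place, $\mathcal{K}$ is a triangle equivalence, and combining it with Eisenbud's theorem yields the desired equivalence $\SCM(R)\simeq\SCM(R^{\sharp})$. The complete details of this matrix-factorization argument are carried out in \cite[Chap.~12]{Y}.
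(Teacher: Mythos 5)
The paper gives no proof of this lemma at all; it simply records it as Kn\"orrer's periodicity and cites \cite[Theorem 12.10]{Y}. Your sketch correctly outlines the standard matrix-factorization argument from that same reference (Eisenbud's equivalence, the change of variables turning $y^2+z^2$ into $uv$ using characteristic $0$ and algebraic closedness, and the Kn\"orrer block-matrix functor with its quasi-inverse), so it is consistent with, and fills in, the approach the paper relies on.
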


%%%%%%%%%%%%%%%%%%%%%%%%%%%%%%%%%%%%%%%%%%%%%
%%%%%%%%%%%%%%%%%%%%%%%%%%%%%%%%%%%%%%%%%%%%%
%%%%%%%%%%%%%%%%%%%%%%%%%%%%%%%%%%%%%%%%%%%%%
%%%%% Proof of Thorem {type A} %%%%%%%%%%%%%%
%%%%%%%%%%%%%%%%%%%%%%%%%%%%%%%%%%%%%%%%%%%%%
%%%%%%%%%%%%%%%%%%%%%%%%%%%%%%%%%%%%%%%%%%%%%
\vspace{12pt}

Now we proceed to the proof of Theorem \ref{type A}.

Let  $k$ be an algebraically closed field of characteristic $0$ and 
let 
$$
R = k[[ x_0, x_1, x_2, \cdots , x_d ]] / (x_0 ^{n + 1} + x_1 ^2 +x_2 ^2 + \cdots + x_d ^2)
$$ 
as in the theorem, where we assume that $d$ is even. 
Suppose that $M \dego N$ for maximal Cohen-Macaulay $R$-modules $M$ and $N$. 
We want to show $M \exto N$.

Since $M \dego N$, we have $\ulM \sto \ulN$ in $\SCMR$  and  $[M] = [N]$  in $K_0 (\modR)$,  by Remarks \ref{remark of stably degenerations}(1) and  \ref{remark of degenerations}(1).  
Now let us denote $R' = k[[x_0]]/(x_0^{n+1})$, and we note that $\SCMR$ and $\SCM (R')$ are equivalent to each other as triangulated categories.  
In fact this equivalence is given by using $d/2$-times of Lemma \ref{periodicity}, since  $d$ is even.
Let $\Omega :\SCMR \to \SCM(R')$ be a triangle functor which gives the equivalence. 
Then, by virtue of Corollary \ref{cor isolated singularity}, we have  
$\Omega (\ulM)  \sto \Omega (\ulN)$ in $\SCM(R')$. 
Since  $R'$  is an artinian algebra, the equivalence  $(1) \Leftrightarrow (3)$  holds in Theorem \ref{conditions for stable degeneration}, and thus 
we have $\tilde{M} \oplus {R'}^m \dego \tilde{N} \oplus {R'}^n$, 
where  $\tilde{M}$  (resp. $\tilde{N}$) is a module in $\CM (R')$ with 
$\underline{\tilde{M}} \cong \Omega (\ulM)$ 
(resp. $\underline{\tilde{N}} \cong \Omega (\ulN)$) and $m$, $n$ are non-negative integers. 
It then follows from Corollary \ref{cor Jordan} that $\tilde{M} \oplus {R'}^m \exto \tilde{N} \oplus {R'}^n$. 
Hence, by Proposition \ref{prop of tri.},  we have that 
$\Omega (\ulM) \trio \Omega (\ulN)$ in $\SCM (R')$. 
Noting that the partial order  $\trio$  is preserved under a triangle functor, we see that  $\ulM \trio \ulN$ in $\SCMR$. 
Since $[M]=[N]$ in $K_0(\modR)$, applying Proposition \ref{prop of tri.},  we finally obtain that $M \exto N$. 
\qed

%%%%%%%%%%%%%%%%%%%%%%%%%%%%%%%%%%%%%%%%%%%%%
%%%%%%%%%%%%%%%%%%%%%%%%%%%%%%%%%%%%%%%%%%%%%
%%%%%%%%%%%%%%%%%%%%%%%%%%%%%%%%%%%%%%%%%%%%%
%%%%% Example of n=2 rank=3 %%%%%%%%%%%%%%%%%
%%%%%%%%%%%%%%%%%%%%%%%%%%%%%%%%%%%%%%%%%%%%%
%%%%%%%%%%%%%%%%%%%%%%%%%%%%%%%%%%%%%%%%%%%%%
\begin{example}
Let $R=k[[x_0, x_1, x_2]]/(x_{0}^{3} + x_{1}^{2} + x_{2}^{2})$, 
where  $k$  is an algebraically closed field of characteristic $0$. 
Let $\p$ and $\q$ be the ideals generated by $(x_{0}, x_{1}-\sqrt{-1}\ x_2)$ and $(x_{0}^{2}, x_{1}+\sqrt{-1}\ x_2)$ respectively. 
It is known that the set  $\{ R, \p, \q \}$  is a complete list of 
the isomorphism classes of indecomposable maximal Cohen-Macaulay modules over $R$. 
See \cite[Chapter 10]{Y}. 
We see from Theorem \ref{type A} that all degenerations in $\CMR$  are given by extensions. 
By this fact we can easily describe the degenerations in  $\CMR$. 
For example, the Hasse diagram of degenerations of maximal Cohen-Macaulay $R$-modules of rank $3$ is a disjoint union of the following diagrams:

$$
\xygraph
{
{\ R^3,} -[u]
{R\oplus \p \oplus \q}
(-[ul] {\p \oplus \p \oplus \p},-[ur] {\q \oplus \q \oplus \q})
}
\qquad 
\qquad
\xygraph
{
{\!\!\!R^2 \oplus \p ,}
(-[u] {R\oplus \q \oplus \q }
(
-[u] {\p \oplus \p  \oplus \q}
)
)
}
\qquad
\qquad 
\xygraph
{
{\!\!\!R^2 \oplus \q .}
(-[u] {R\oplus \p \oplus \p}
(
-[u] {\p \oplus \q \oplus \q}
)
)
}
$$
\end{example}

\begin{remark}
Theorem \ref{type A} is expected to hold without the assumption on $d$. 
Unfortunately, at the moment of writing the manuscript, the authors do not know  any appropriate proof for this.   
\end{remark}

\vspace{6pt}

%%%%%%%%%%%%%%%%%%%%%%%%%%%%%%%%%%%%%%%%%%%%%%%%%%%%%%%%%
%%%%%%%%%%%%%%%%%%%%%%%%%%%%%%%%%%%%%%%%%%%%%%%%%%%%%%%%%
%%%%%%%%%%%%%%%%%%%%%%%%%%%%%%%%%%%%%%%%%%%%%%%%%%%%%%%%%
%%%%%%%%%%%%%%%%%%%%%%% Section 3 %%%%%%%%%%%%%%%%%%%%%%%
%%%%%%%%%%%%%%%%%%%%%%%%%%%%%%%%%%%%%%%%%%%%%%%%%%%%%%%%%
%%%%%%%%%%%%%%%%%%%%%%%%%%%%%%%%%%%%%%%%%%%%%%%%%%%%%%%%%
%%%%%%%%%%%%%%%%%%%%%%%%%%%%%%%%%%%%%%%%%%%%%%%%%%%%%%%%%
%%%%%%%%%%%%%%%%%%%%%%%%%%%%%%%%%%%%%%%%%%%%%%%%%%%%%%%%%
\section{Extended orders}\label{Orders}
In the rest of this paper $R$ denotes a (commutative) Cohen-Macaulay complete local $k$-algebra, where  $k$  is any field. 

We shall show that any extended degenerations of maximal Cohen-Macaulay $R$-modules are generated by extended degenerations of Auslander-Reiten (abbr.AR) sequences if $R$  is of finite Cohen-Macaulay representation type. 
For the theory of AR sequences of maximal Cohen-Macaulay modules, we refer to \cite{Y}. 
First of all we recall the definitions of the extended orders generated respectively by  degenerations, extensions and AR sequences. 

%%%%%%%%%%%%%%%%%%%%%%%%%%%%%%%%%%%%%%%%%%
%%%%%%%%%%%%%%%%%%%%%%%%%%%%%%%%%%%%%%%%%%
%%%%%%%%%%%%%%%%%%%%%%%%%%%%%%%%%%%%%%%%%%
%%%%%%% Definition of Deg order %%%%%%%%%%
%%%%%%%%%%%%%%%%%%%%%%%%%%%%%%%%%%%%%%%%%%
%%%%%%%%%%%%%%%%%%%%%%%%%%%%%%%%%%%%%%%%%%
%%%%%%%%%%%%%%%%%%%%%%%%%%%%%%%%%%%%%%%%%%
\begin{definition}\label{Deg order}\cite[Definition 4.11, 4.13]{Y02}
The relation $\Dego $ on $\CMR$, which is called the extended degeneration order, is a partial order generated by the following rules: 
\begin{itemize}
\item[(1)] If $M \dego N$ then $M \Dego N$.
\item[(2)] $M \Dego N$ if and only if $M \oplus L \Dego N \oplus L$ for all 
 $L \in \CMR$. 
\item[(3)] $M \Dego N$ if and only if $M^n \Dego N^n$ for all natural numbers $n$.  
\end{itemize}
\end{definition}

%%%%%%%%%%%%%%%%%%%%%%%%%%%%%%%%%%%%%%%%%%
%%%%%%%%%%%%%%%%%%%%%%%%%%%%%%%%%%%%%%%%%%
%%%%%%%%%%%%%%%%%%%%%%%%%%%%%%%%%%%%%%%%%%
%%%% Definition of Ext and AR order %%%%%%
%%%%%%%%%%%%%%%%%%%%%%%%%%%%%%%%%%%%%%%%%%
%%%%%%%%%%%%%%%%%%%%%%%%%%%%%%%%%%%%%%%%%%
%%%%%%%%%%%%%%%%%%%%%%%%%%%%%%%%%%%%%%%%%%
\begin{definition}\label{Ext orders}\cite[Definition 3.6]{Y02}
The relation $\Exto $ on $\CMR$, which is called the extended extension order, is a partial order generated by the following rules: 
\begin{itemize}
\item[(1)] If $M \exto N$ then $M \Exto N$.
\item[(2)] $M \Exto N$ if and only if $M \oplus L \Exto N \oplus L$ for all $L \in \CMR$. 
\item[(3)] $M \Exto N$ if and only if $M^n \Exto N^n$ for all natural numbers $n$.  
\end{itemize}
\end{definition}

\begin{definition}\label{AR orders}\cite[Definition 5.1]{Y02}
The relation  $\ARo $  on $\CMR$, which is called the extended AR order, is a partial order generated by the following rules: 
\begin{itemize}
\item[(1)] If $0 \to X \to E \to Y \to 0$ is an AR sequence in $\CMR$, then $E \ARo X \oplus Y$.
\item[(2)] $M \ARo N$ if and only if $M \oplus L \ARo N \oplus L$ for all 
$L \in \CMR$. 
\item[(3)] $M \ARo N$ if and only if $M^n \ARo N^n$ for all natural numbers $n$.\end{itemize} 
\end{definition}

The following is the main theorem of this section.

%%%%%%%%%%%%%%%%%%%%%%%%%%%%%%%%%%%%%%%%%%
%%%%%%%%%%%%%%%%%%%%%%%%%%%%%%%%%%%%%%%%%%
%%%%%%%%%%%%%%%%%%%%%%%%%%%%%%%%%%%%%%%%%%
%%%%%%%% Main Theorem %%%%%%%%%%%%%%%%%%%%
%%%%%%%%%%%%%%%%%%%%%%%%%%%%%%%%%%%%%%%%%%
%%%%%%%%%%%%%%%%%%%%%%%%%%%%%%%%%%%%%%%%%%
%%%%%%%%%%%%%%%%%%%%%%%%%%%%%%%%%%%%%%%%%%
\begin{theorem}\label{theorem of AR order}
Let $R$ be a Cohen-Macaulay complete local $k$-algebra as above. 
Adding to this,  we assume that $R$ is of finite Cohen-Macaulay representation type, i.e. there are only a finite number of isomorphism classes of objects in $\CMR$. 
Then the following conditions are equivalent for  $M, N \in \CMR$:
\begin{itemize}
\item[(1)]
$M \Dego N$, 
\item[(2)]
$M \Exto N$, 
\item[(3)]
$M \ARo N$. 
\end{itemize} 
\end{theorem}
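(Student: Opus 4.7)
The plan is to establish the easy direction $(3) \Rightarrow (2) \Rightarrow (1)$ formally, and then to prove the substantive converse $(1) \Rightarrow (3)$ via a hom-vector characterization that the finite Cohen-Macaulay representation type of $R$ makes available.

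The implications $(3) \Rightarrow (2) \Rightarrow (1)$ are essentially formal. Every AR sequence $0 \to X \to E \to Y \to 0$ is in particular a short exact sequence, so it realizes the extension degeneration $E \exto X \oplus Y$ of Definition \ref{ext order}; since the closure rules (summand cancellation and raising to $n$-th powers) are literally identical in Definitions \ref{Deg order}, \ref{Ext orders}, and \ref{AR orders}, every $\ARo$-derivation is automatically an $\Exto$-derivation. Similarly, Remark \ref{remark of extension} promotes each $\exto$ step to a $\dego$ step, giving $\Exto \Rightarrow \Dego$.

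For $(1) \Rightarrow (3)$, enumerate the finitely many indecomposables $X_1,\ldots,X_s$ of $\CMR$ and introduce the hom vector $\mathbf{h}(M) := (\dim_k \Hom_R(X_i, M))_{i=1}^{s}$; each component is finite because finite CM type forces $\End_R(X_1 \oplus \cdots \oplus X_s)$ to be an artinian $k$-algebra. A single degeneration $M \dego N$ forces the coordinate-wise inequality $\mathbf{h}(M) \leq \mathbf{h}(N)$ (by upper semicontinuity of $\Hom$ on flat families, as established in \cite{Y02}) together with $[M] = [N]$ in $K_0(\modR)$ from Remark \ref{remark of degenerations}(1); both conditions survive the closure rules of $\Dego$, so $M \Dego N$ implies the pair of numerical conditions $[M] = [N]$ and $\mathbf{h}(M) \leq \mathbf{h}(N)$.

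The core of the proof is the converse combinatorial statement: these two numerical conditions already force $M \ARo N$. One proves this by induction on the total defect $\sum_i (\mathbf{h}(N)_i - \mathbf{h}(M)_i)$, reducing at each step by applying a well-chosen AR sequence $0 \to \tau Y \to E \to Y \to 0$, whose mesh relations yield an identity of hom vectors (with the discrepancy supported only at the coordinate of $Y$) that permits a step in $\ARo$ strictly decreasing the defect. The main obstacle is locating such a reducing AR sequence whenever the defect is positive; this is the Cohen-Macaulay counterpart of the classical Bongartz--Zwara argument from the artinian setting, and the necessary machinery is in place in \cite{Y02}. Once this combinatorial lemma is in hand, the pair of implications $\Dego \Rightarrow \text{(numerical conditions)} \Rightarrow \ARo$ closes the circle with the formal direction $\ARo \Rightarrow \Exto \Rightarrow \Dego$ established at the outset.
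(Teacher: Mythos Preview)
Your proposal has a genuine gap at its foundation. The hom vector $\mathbf{h}(M) = (\dim_k \Hom_R(X_i, M))_{i}$ does not have finite components in general: the theorem allows $R$ to have positive Krull dimension, and then already $\Hom_R(R, M) = M$ is an $R$-module of infinite $k$-dimension. The assertion that finite CM type forces $\End_R(X_1 \oplus \cdots \oplus X_s)$ to be an artinian $k$-algebra is simply false; that Auslander algebra is an $R$-order of the same Krull dimension as $R$. Consequently the upper-semicontinuity inequality, the total-defect induction, and the Bongartz--Zwara reduction step you invoke all lose their meaning. That this is not merely a cosmetic issue is confirmed by the remark immediately following the theorem in the paper: the hom order $\homo$ is shown in \cite{Y02} to coincide with $\ARo$, $\Exto$, $\Dego$ only under \emph{extra} hypotheses ($R$ a one-dimensional domain, or $R$ of dimension two), so ``the necessary machinery is in place in \cite{Y02}'' is not an accurate summary for the general statement.

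The paper's own argument is quite different and does not pass through any $k$-dimension count. It proves $(1)\Rightarrow(2)$ directly from the Zwara sequence of Theorem~\ref{Zwara sequence}: a single degeneration yields $0\to Z\to M\oplus Z\to N\to 0$, hence $M\oplus Z \exto N\oplus Z$, hence $M\Exto N$ by the cancellation rule. For $(2)\Rightarrow(3)$ it works in the Auslander category $\modC$: the functor $F$ attached to a short exact sequence $0\to L\to M\to N\to 0$ satisfies $F(R)=0$ and therefore has a finite composition series whose simple factors correspond to AR sequences; comparing classes in $K_0(\modC)$ and using the injectivity of $\gamma:\GR\to K_0(\modC)$ gives an identity $L-M+N=\sum_i(X_i-E_i+Y_i)$ in $\GR$, from which $M\ARo N$ follows by the summand-cancellation rule. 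The finiteness input here is the finite length of such functors (equivalently, finite length of the relevant $\Ext^1$), which is what finite CM type genuinely provides, rather than finite $k$-dimension of $\Hom$.
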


%%%%%%%%%%%%%%%%%%%%%%%%%%%%%%%%%%%%%%%%%%
%%%%%%%%%%%%%%%%%%%%%%%%%%%%%%%%%%%%%%%%%%
%%%%%%%%%%%% proof %%%%%%%%%%%%%%%%%%%%%%%
%%%%%%%%%%%%%%%%%%%%%%%%%%%%%%%%%%%%%%%%%%
%%%%%%%%%%%%%%%%%%%%%%%%%%%%%%%%%%%%%%%%%%
\vspace{6pt}
\noindent
{\it Proof.}  The implications $(3) \Rightarrow (2) \Rightarrow (1)$  are clear from the definitions. 

To prove $(1) \Rightarrow (2)$, 
it suffices to show that $M \Exto N$ whenever $M$ degenerates to $N$. 
If $M$ degenerates to $N$, then,  by virtue of Theorem \ref{Zwara sequence}, we have a short exact sequence
$
0 \to Z \to M \oplus Z \to N \to 0 
$
with  $Z \in \CMR$. 
Thus  $M \oplus Z \exto N \oplus Z$, hence $M \Exto N$. 

\vspace{6pt}
It remains to prove that  $(2) \Rightarrow (3)$, for which we need several preparations. 

\vspace{6pt}
%%%%%%%%%%%%%%%%%%%%%%%%%%%%%%%%%%%%%%%%%%
%%%%%%%% Auslander category %%%%%%%%%%%%%%
%%%%%%%%%%%%%%%%%%%%%%%%%%%%%%%%%%%%%%%%%%
Under the circumstances of Theorem \ref{theorem of AR order}, 
we consider the functor category  $\ModC$ and the Auslander category $\modC$ 
of $\CMR$. 
By definition, $\ModC$  is the category whose objects are contravariant additive functors from $\CMR$  to the category of Abelian groups and whose morphisms are natural transformations between functors. 
Note that  $\ModC$  is an Abelian category. 
The Auslander category  $\modC$  is a full subcategory of  $\ModC$  consisting of all finitely presented functors. 
Recall that a functor  $F \in \ModC$  is called finitely presented if 
there is an exact sequence in $\ModC$; 
$$
\Hom _R (\  , M) \to \Hom _R (\  , N) \to F \to 0, 
$$
for $M, N \in \CMR$. 

If there is a short exact sequence 
$0 \to L \to M \to N \to 0$ in $\CMR$, 
then the finitely presented functor $F$ is defined by the exact sequence in 
$\modC$; 
\begin{equation}\label{fp}
0 \to \Hom _R (\  , L) \to \Hom _R (\  , M) \to \Hom _R (\  , N) \to F \to 0. 
\end{equation}
Such a functor $F \in \modC$  satisfies $F(R) =0$. 
Conversely every element $F \in \modC$ with the property $F(R) =0$  is obtained in this way from a short exact sequence in $\CMR$.  

If  $0 \to X \to E \to Y \to 0$  is an AR sequence in $\CMR$, then the functor $S$  defined by an exact sequence 
\begin{equation}\label{simple}
0 \to \Hom _R (\  , X) \to \Hom _R (\  , E) \to \Hom _R (\  , Y) \to S \to 0 
\end{equation}
is a simple object in $\modC$  and all the simple objects in $\modC$ are obtained in this way from AR sequences.

It is proved in \cite[(13.7.4)]{Y} that 

\vspace{3pt} 
\noindent 
$(4.3) \ $ every object $F$  in  $\modC$  with  $F(R) =0$  has a composition series, i.e. there is a filtration by subobjects  $0 \subset F_1 \subset F_2 \subset \cdots \subset F_n = F$ such that each $F_{i}/F_{i-1}$ is a simple object in $\modC$. 
\vspace{3pt}

Now we consider a free Abelian group 
$$
\GR = \bigoplus \ \Z \cdot X,
$$
where $X$ runs through all isomorphism classes of indecomposable objects in $\CMR$. 
There is a group homomorphism 
$$
\gamma : \GR \to K_{0}(\modC ),  
$$
defined by $\gamma (M) =[ \Hom _R ( \ ,M)]$  for $M \in \CMR$. 

It is prove in \cite[Theorem 13.7]{Y} that 

\vspace{3pt} 
\noindent 
$(4.4) \ $ the group homomorphism $\gamma$  is injective. 
\vspace{3pt}

\noindent 
(See the first paragraph of the proof of Theorem 13.7 in \cite{Y}. )

The following lemma is essentially due to Auslander and Reiten \cite{AR86}. 
%%%%%%%%%%%%%%%%%%%%%%%%%%%%%%%%%%%%%%%%%%
%%%%%%%%%%%%%%%%%%%%%%%%%%%%%%%%%%%%%%%%%%
%%%%%%%%%%%%%%%%%%%%%%%%%%%%%%%%%%%%%%%%%%
%%%%%%%% Lemma of EX = AR %%%%%%%%%%%%%%%%
%%%%%%%%%%%%%%%%%%%%%%%%%%%%%%%%%%%%%%%%%%
%%%%%%%%%%%%%%%%%%%%%%%%%%%%%%%%%%%%%%%%%%
%%%%%%%%%%%%%%%%%%%%%%%%%%%%%%%%%%%%%%%%%%
\begin{lemma}\label{EX = AR}
Under the same assumptions on $R$ as in Theorem \ref{theorem of AR order}, 
let  $0 \to L \to M \to N \to 0$  be a short exact sequence in $\CMR$. 
Then there are a finite number of AR sequences in $\CMR$; 
$$
0 \to X_i \to E_i  \to Y_i  \to 0 \quad (1 \leq i \leq n), 
$$ 
such that there is an equality in $\GR$; 
$$
L - M + N = \sum  _{i = 1}^{n} (X_i - E_i + Y_i). 
$$
\end{lemma}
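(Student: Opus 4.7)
The plan is to translate the statement, which lives in the free abelian group $\GR$, into one about classes in $K_0(\modC)$ via the homomorphism $\gamma$, carry out the computation there using composition series of finitely presented functors, and then pull the resulting equality back through the injectivity of $\gamma$.

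First, I would apply the Yoneda embedding to the given short exact sequence $0 \to L \to M \to N \to 0$ and consider the associated finitely presented functor $F \in \modC$ sitting in the exact sequence
$$0 \to \Hom_R(-,L) \to \Hom_R(-,M) \to \Hom_R(-,N) \to F \to 0,$$
as in (\ref{fp}). The crucial preliminary observation is that $F(R) = 0$: since the free module $R$ is projective in $\CMR$, the functor $\Hom_R(R,-)$ is exact, so $\Hom_R(R,M) \to \Hom_R(R,N)$ is already surjective. Hence fact $(4.3)$ applies and gives a finite filtration $0 \subset F_1 \subset \cdots \subset F_n = F$ whose successive quotients $S_i = F_i/F_{i-1}$ are simple objects of $\modC$.

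Next, I would identify each $S_i$ with the functor coming from an AR sequence. By the description recalled in (\ref{simple}), every simple object in $\modC$ fits into an exact sequence
$$0 \to \Hom_R(-,X_i) \to \Hom_R(-,E_i) \to \Hom_R(-,Y_i) \to S_i \to 0$$
arising from a uniquely determined AR sequence $0 \to X_i \to E_i \to Y_i \to 0$ in $\CMR$. Taking classes in $K_0(\modC)$, the exact sequence (\ref{fp}) yields
$$[\Hom_R(-,L)] - [\Hom_R(-,M)] + [\Hom_R(-,N)] = [F] = \sum_{i=1}^n [S_i],$$
and each $[S_i] = [\Hom_R(-,X_i)] - [\Hom_R(-,E_i)] + [\Hom_R(-,Y_i)]$. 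Recognizing the left-hand side as $\gamma(L-M+N)$ and the right-hand side as $\sum_i \gamma(X_i - E_i + Y_i)$, one obtains the desired equality in $K_0(\modC)$.

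Finally, I would invoke the injectivity of $\gamma$ established in $(4.4)$ to lift this equality back to $\GR$, yielding
$$L - M + N = \sum_{i=1}^n (X_i - E_i + Y_i).$$
The main conceptual point — and the only step that is not pure bookkeeping — is the verification that $F(R)=0$, since without this one could not apply $(4.3)$; beyond that, the argument is essentially a dévissage in the Auslander category combined with the injectivity of $\gamma$, both of which are quoted from \cite{Y}. I do not anticipate a serious obstacle here, provided one is careful to count composition factors with multiplicity when forming the finite list of AR sequences.
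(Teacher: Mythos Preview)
Your proposal is correct and follows essentially the same route as the paper: define the finitely presented functor $F$ from the given short exact sequence, use $(4.3)$ to obtain a composition series with simple quotients coming from AR sequences, pass to $K_0(\modC)$, and pull back via the injectivity of $\gamma$ in $(4.4)$. The only cosmetic difference is that you spell out why $F(R)=0$, which the paper had already recorded in the discussion preceding the lemma.
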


\begin{proof}
Consider the functor $F \in \modC$ defined by (\ref{fp}). 
By virtue of (4.3), there is a filtration by subobjects  $0 \subset F_1 \subset F_2 \subset \cdots \subset F_n = F$ such that each $F_{i}/F_{i-1}$ is a simple object in $\modC$. 
Now take an AR sequence  $0 \to X_i \to E_i \to Y_i \to 0$ corresponding to the simple functor $F_i/F_{i-1}$ for each $i$. 
Thus we have equalities in $K_{0} (\modC )$: 
$$
\begin{array}{ll}
[\Hom_R(&  , L)] - [\Hom_R(\  , M)] + [\Hom_R(\  , N)] = \sum ^{n}_{i = 1} [F_{i}/F_{i-1}] \vspace{6pt}\\
&= \sum ^{n}_{i = 1} (\Hom_R[(\  , X_i)]  - [\Hom_R(\  , E_i)]+ [\Hom_R(\  , Y_i)])
\end{array}
$$ 
This is equivalent to that 
$$
\gamma ( L-M+N)= \sum ^{n}_{i = 1} \gamma  ( X_i- E_i+Y_i). 
$$
Since $\gamma$ is injective (4.4), we have 
the equality $L -M + N = \sum ^{n}_{i = 1}(X_i  - E_i + Y_i)$ in $\GR$. 
\end{proof}
%%%%%%%%%%%%%%%%%%%%%%%%%%%%%%%%%%%%%%%%%%
%%%%%%%%%%%%%%%%%%%%%%%%%%%%%%%%%%%%%%%%%%
%%%%%%%%%%%%%%% proof %%%%%%%%%%%%%%%%%%%%
%%%%%%%%%%%%%%%%%%%%%%%%%%%%%%%%%%%%%%%%%%
%%%%%%%%%%%%%%%%%%%%%%%%%%%%%%%%%%%%%%%%%%
Now we shall continue the proof of Theorem \ref{theorem of AR order}.  
It remains to show that $M \Exto N$ implies that $M \ARo N$. 
We have only to prove that if $M$ degenerates by an extension to $N$, then $M \ARo N$. 
Assuming that $M$ degenerates by an extension to $N$, we have a short exact sequence 
$0 \to N_1 \to M \to N_2 \to 0$  in $\CMR$  with  $N \cong N_1 \oplus N_2$. 
Then, by Lemma \ref{EX = AR}, there are a finite number of AR sequences in $\CMR$; 
$$
0 \to X_i \to E_i  \to Y_i  \to 0 \quad (1 \leq i \leq n), 
$$ 
such that there is an equality in $\GR$; 
$$
N_1 - M + N _2 =  \sum  _{i = 1}^{n}  (X_i - E_i + Y_i). 
$$
This equality is equivalent to that there is an isomorphism of $R$-modules;
$$
M \oplus \sum  _{i = 1}^{n}(X_{i} \oplus Y_{i}) \cong  N_1 \oplus N_2 \oplus \sum  _{i = 1}^{n}E_{i}. 
$$
Since $E_{i}  \ARo (X_{i} \oplus Y_{i})$ for all $1 \leq i \leq n$, we have 
$$
\begin{array}{ll}
M\oplus \sum  _{i = 1}^{n}(X_{i} \oplus Y_{i}) &\cong  N_1 \oplus N_2 \oplus \sum  _{i = 1}^{n}E_{i}  \vspace{3pt} \\
&\ARo N_1 \oplus N_2 \oplus \sum  _{i = 1}^{n}(X_{i} \oplus Y_{i}). \\
\end{array}
$$
Therefore  $M \ARo N_1 \oplus N_2 \cong N$, and the proof is completed.  
\qed

%%%%%%%%%%%%%%%%%%%%%%%%%%%%%%%%%%%%%%%%%%
%%%%%%%%%%%%%%%%%%%%%%%%%%%%%%%%%%%%%%%%%%
%%%%%%%%%%%%%%%%%%%%%%%%%%%%%%%%%%%%%%%%%%
%%%% Remark of results of Yoshino %%%%%%%%
%%%%%%%%%%%%%%%%%%%%%%%%%%%%%%%%%%%%%%%%%%
%%%%%%%%%%%%%%%%%%%%%%%%%%%%%%%%%%%%%%%%%%
%%%%%%%%%%%%%%%%%%%%%%%%%%%%%%%%%%%%%%%%%%
\begin{remark}
In the paper \cite{Y02}, the second author introduced the order relation $\homo$ as well. 
Adding to the assumption that $R$  is of finite Cohen-Macaulay representation type, 
if we assume further conditions on $R$, such as $R$ is an integral domain of dimension $1$  or  $R$ is of dimension $2$, 
 then he showed that $\homo$ is also equal to any of  $\ARo$, $\Exto$ and $\Dego$. 
\end{remark}

%%%%%%%%%%%%%%%%%%%%%%%%%%%%%%%%%%%%%%%%%%%%%%%%%%
%%%%%%%%%%%%%%%%%%%%%%%%%%%%%%%%%%%%%%%%%%%%%%%%%%
%%%%%%%%%%%%%%%%%%%%%%%%%%%%%%%%%%%%%%%%%%%%%%%%%%
%%%%%%%%%% References %%%%%%%%%%%%%%%%%%%%%%%%%%%%
%%%%%%%%%%%%%%%%%%%%%%%%%%%%%%%%%%%%%%%%%%%%%%%%%%
%%%%%%%%%%%%%%%%%%%%%%%%%%%%%%%%%%%%%%%%%%%%%%%%%%
%%%%%%%%%%%%%%%%%%%%%%%%%%%%%%%%%%%%%%%%%%%%%%%%%%

\end{document}